\documentclass[preprint,11pt]{elsarticle}
\usepackage[utf8]{inputenc}

\def\PP{{\mathbb P}}

\usepackage{natbib} 
\usepackage{amsmath,amssymb,epsfig,epstopdf,graphicx,cases,
subfigure,float, url, color, bbm, lscape, bm,enumitem,caption,array}
\usepackage{amsthm}
\usepackage{booktabs,multirow,makecell}
\newtheorem{theorem}{Theorem}[section]

\newtheorem{definition}{Definition}[section]
\newtheorem{lemma}{Lemma}[section]

\newtheorem{assumption}{Assumption}
\usepackage{multirow}
\usepackage{mathrsfs}
\usepackage[linesnumbered,ruled]{algorithm2e}
\usepackage{booktabs}

\usepackage{geometry}
\graphicspath{{./Figures/}}
\begin{document}
\begin{frontmatter}

\title{Maintenance optimization of a two-component system with mixed observability}
\author[add1,add2]{Nan Zhang\corref{cor1}}
\ead{nan.zhang@bit.edu.cn }
\cortext[cor1]
{Corresponding author}
\address[add1]{School of Management, Beijing Institute of Technology,  Beijing, China}
\address[add2]{State Key Laboratory of Explosion Science and Safety Protection, Beijing Institute of Technology, Beijing, China}

\author[add3]{Inmaculada T. Castro}
\ead{inmatorres@unex.es}
\address[add3]{Department of Mathematics, University of Extremadura, Caceres, Spain}

\author[add4]{Maria Luz Gamiz}
\ead{mgamiz@ugr.es}
\address[add4]{Department of Statistics and Operational Research, University of Granada, Spain}
\begin{abstract}
This paper studies maintenance optimization for a two-component system under mixed observability. Component~$U_1$ is fully monitored, whereas component~$U_2$ is only partially observable due to sensing limitations. The system exhibits unidirectional positive degradation dependence, in which the health state of component~$U_1$ influences the degradation process of component~$U_2$, but not vice versa. We propose a novel framework for modeling and optimizing maintenance decisions for such systems using a partially observable Markov decision process (POMDP). Under mild conditions, we analytically establish structural properties of the optimal maintenance policy. Baum-Welch algorithm with multiple sample paths is developed to estimate the unknown 
system parameters in the context of a covariate-dependent Hidden Markov Model.
Numerical experiments demonstrate the effectiveness of the proposed parameter estimation and the maintenance policy. Across 64 instances, we  show that it consistently outperforms classical threshold-based policies. Specifically, when the degradation of component $U_1$ is faster, it achieves maximal cost reductions of up to approximately $6\%$.
\end{abstract}
\begin{keyword}
Maintenance;
Mixed observability;
Baum-Welch algorithm; 
Hidden Markov model; 
Partially observable Markov
decision process
\end{keyword}
\end{frontmatter}

\section{Introduction}\label{sec_intro}
Modern engineering systems—such as industrial machinery, automotive systems, and wind turbines—often consist of heterogeneous components with differing sensing and monitoring capabilities. In many of these systems, {mixed observability} is a common phenomenon: some components are fully observable, with their health states perfectly revealed, while others remain only partially observable due to cost constraints or intrinsic inaccessibility.

In  such systems, the state of the fully observable components often directly influences the degradation process of the partially observable ones, while the former remains independent of the latter, giving rise to what we term {
unidirectional positive degradation dependence (UPDD)}. For instance, in manufacturing plants, a motor’s state may be continuously monitored using high-precision sensors, whereas the state of a gear such as its surface wear is partially reflected through indicators like vibration patterns or noise levels. Here, UPDD manifests in that when the motor operates under overheating or high-load conditions, it imposes excessive mechanical stress on the gear by accelerating its wear \cite{mao2023transmission}. In a wind turbine system, the generator’s state is generally directly monitored through sensors tracking output power, temperature, and vibration, $etc.,$ whereas blade conditions
  such as micro-cracks or surface erosion are only partially revealed through indicators such as reduced power efficiency or abnormal vibrations \cite{rezamand2020critical}. Operating the generator in high wind speed will increase mechanical stress on the blades, accelerating their degradation, while the health evolution of the generator remains independent of the blade's state.

Maintenance optimization for UPDD systems with mixed observability introduces several significant challenges. 
First, under UPDD, the degradation evolution of the partially observable component is not governed by a fixed transition matrix but rather by a family of transition matrices associated with the observable component’s state. This greatly increases the parameter estimation burden.  The stochastic evolution of the hidden component is modulated by external random variables, 
i.e. the current state of the observable component. In other words, a modulated hidden Markov 
model is considered for explaining the functioning of the partially observable component.
Second, maintenance decision-making becomes more complex due to the need to fuse heterogeneous information sources: the fully observable component provides precise state, whereas the partially observable component requires belief-state updates that depend both on its own hidden degradation dynamics and on the state of the fully observable component. Optimizing maintenance over such a hybrid state space is inherently challenging.  Eggertsson et al. \cite{eggertsson2024maintenance} proposed a condition-based maintenance policy for capital goods with environment-dependent degradation processes, where mixed observation information is incorporated in decision-making. However, their work considers a single-component system  with observable and “irreparable”  environment, which does not align with the multi-component, repairable setting considered in this work.
Third, parameter estimation is also a challenging issue.
In the reliability domain, the expectation–maximization (EM) algorithm has been widely applied to hidden Markov models where system states are partially observable \cite{gamiz2023hidden}. Basic theoretical results concerning maximum likelihood estimation for hidden
Markov models (HMMs) were established in \cite{BaumPetrie1966}, 
\cite{Leroux:92}, and \cite{Bickel:etal:98}. The forward--backward reestimation
algorithm was introduced in \cite{Baum:etal:70}, and its
EM structure was later formalized by
\cite{Dempster:etal:77}.
However, most maintenance optimization studies under partial information have primarily focused on the maintenance decision problem, typically formulated in a partially observable Markov decision process framework (POMDPs), while assuming that all model parameters are known.
To the best of the authors’ knowledge, only \cite{gamiz2023dynamic} develops both a cost-optimized, belief-state–driven policy and a probabilistic, signal-driven preventive maintenance policy in the context of hidden Markov models.
In contrast, the present work develops a
Baum-Welch-type
algorithm to estimate the unknown parameters.

In addition, from a theoretical perspective, important  structural properties of maintenance policies such as cost monotonicity and the existence of threshold-type optimal policies can often be justified under reasonable assumptions on the transition and observation matrices, as shown in the sequential decision-making literature for maintenance models \cite{ohnishi1986optimal,byon2010season}. These structural properties for UPDD systems with mixed observability need to be  investigated.
This paper addresses these challenges by proposing a unified POMDP-based framework for maintenance optimization in two-component dependent systems with mixed observability and UPDD.

The main  contributions of this work are as follows. 
\begin{itemize}
  \item We propose a novel model considering UPDD  with mixed observability, which captures a broad range of practical engineering systems with heterogeneous sensing capabilities and unidirectional degradation influence.
  \item We investigate the structural properties of optimal maintenance policies for UPDD systems under mixed observability.
  \item 
The Baum-Welch algorithm with multiple sample paths is developed to 
estimate unknown parameters in a Hidden Markov Model, where the 
transition probabilities are determined by external information.
  \item Through numerical experiments and comparisons, we demonstrate the effectiveness of the proposed model. 
\end{itemize}
The rest of this paper is structured as follows. Section~\ref{sec_lit} reviews the literature on condition-based maintenance of dependent systems, with particular emphasis on decision-making under partial information. Section~\ref{sec_Model} describes the dependent system, its maintenance, and the associated POMDP framework. Section~\ref{sec_property} investigates the structural properties of the maintenance cost function and the optimal policy. In Section~\ref{sec_ParaEs}, a Baum-Welch algorithm with multiple sample paths  is developed to estimate the unknown system parameters. Extensive numerical experiments are presented in Section~\ref{sec_numerical} to evaluate the robustness of the parameter estimation and to assess the resulting maintenance decisions. Finally, we draw the conclusions of this work and directions for future research are discussed in Section~\ref{sec_conclusion}.

\section{Literature review}\label{sec_lit}

This paper contributes to the literature on multi-component dependent systems and condition-based maintenance decision-making with incomplete information. We begin with a review of condition-based maintenance for systems with dependent components, followed by a discussion of approaches under partial information, with a particular focus on systems composed of multiple interdependent components.
\subsection{Condition-based Maintenance of systems with dependent components}

Condition-based maintenance (CBM) has attracted increasing attention over the past decades, driven largely by advances in the Internet of Things and sensing technologies. A comprehensive review of CBM for  systems with dependent components is provided by \cite{keizer2017condition}, who systematically examined maintenance studies published between 2001 and 2018. Their review categorized system dependencies into several types, including structural dependence, economic dependence, stochastic dependence, and resource dependence; representative studies can be found in \cite{zhu2016multi,bautista2024correlation,zhang2025reliability}. More recently, Zhao et al. \cite{zhao2025condition} reviewed the literature on maintenance optimization with a particular emphasis on models developed within the framework of Markov decision processes (MDPs).
Building on these studies, the present work focuses on CBM-related research published in recent years that is most closely aligned with our scope and objectives, particularly studies addressing stochastic dependence and decision-making with MDPs.

In this regard, Oakley et al. \cite{oakley2022condition} investigated a condition-based maintenance optimization problem for a continuously monitored load-sharing system, in which a utility (reward) function was introduced to represent the value function. The impacts of both stochastic and economic dependence on the resulting policy behavior were examined. Zhao et al. \cite{zhao2023condition} developed a maintenance framework for systems with heterogeneous failure dependencies, where the failure of one component affects other components in a non-uniform and asymmetric manner. Due to the complexity introduced by such heterogeneous dependencies, no analytical structural properties of the optimal maintenance policy were derived.

Uit het Broek et al. \cite{broek2021joint} jointly optimized condition-based maintenance and production policies for a two-component load-sharing system and discussed the impact of setup costs on maintenance clustering behavior. Other related studies on condition-based maintenance with dependent components can be found in \cite{andersen2022numerical,zhang2023condition,chai2024condition}.

\subsection{Maintenance with incomplete information}

In maintenance optimization, the concept of partial information has been widely used, although its specific meaning varies across research contexts and modeling frameworks.
Generally speaking, partial information refers to situations where the true deterioration state of a system is hidden but some relevant signals can be observed and utilized for decision-making. In many studies, this corresponds to a hidden Markov process presented
in the previous subsection, in which the observation mechanism is described by a probabilistic observation matrix that links the observable outcomes to the underlying states. Different interpretations of partial information have appeared in the literature. White
\cite{white1977markov} was among the first to investigate such problems in the context of maintenance planning. He developed a general Markov quality control model in which the condition of a production process evolves as a Markov chain and is only partially observable through
noisy inspection outcomes. The decision maker must determine whether to do nothing, perform an inspection, or undertake maintenance based on these imperfect observations. This work extends the earlier study by Ross \cite{ross1971quality} and provides a detailed analysis of the
optimal policy structure when incomplete information arises from false negative/positive errors. In contrast, Tijms and Van der Duyn Schouten \cite{tijms1985markov} defined partial information as a setting in which the decision maker only knows whether the system has
failed, without access to its actual degradation level. They formulated the inspection and maintenance problem as a semi-Markov decision process with non-negligible, state-dependent maintenance times, and developed an algorithmic procedure to identify the
optimal policy that minimizes the long-run average cost rate. In this review, we focus on partial information that can be represented through a belief vector or probability distribution, which characterizes the decision maker’s uncertain knowledge of the system state. Under this representation, the partially observable Markov decision process (POMDP) framework has been widely adopted in maintenance modeling and system analysis. Ohnishi et al. \cite{ohnishi1986optimal} investigated the optimal inspection and maintenance policy for a deteriorating system in which incomplete information about the system state is periodically observed. The decision maker must determine whether to perform a perfect inspection to reveal the true state or to carry out maintenance. Their analysis extends previous results in \cite{white1977markov} to higher-dimensional belief spaces and establishes structural properties under general monotonicity conditions based on totally positive of order two ($TP_
2$) reordered matrices. Considering the environmental effects on the deterioration of wind turbine systems, Byon et al. \cite{byon2010season} developed a weather-dependent optimal maintenance policy for systems with multiple failure modes under partially observable conditions. Subsequently, Byon \cite{EunshinByon2013} extended this work by introducing additional maintenance actions and proposing a tractable approximation of the dynamic decision-making model to enable practical implementation. Similarly, Eggertsson et al. \cite{eggertsson2024maintenance} proposed a condition-based maintenance (CBM) policy for capital goods subject to environment-dependent degradation, incorporating mixed observation information into the decision-making process. In another related study, van Oosterom \cite{van2017optimal} examined an integrated inspection and sensor replacement problem in which sensor informativeness degrades over time, modeled through the Blackwell order, and demonstrated that a time-dependent threshold policy is optimal.

All the aforementioned studies primarily focus on single-component systems. In contrast, although research on multi-component systems is extensive within the maintenance optimization literature, studies that explicitly incorporate partial information remain limited. Karaba{\u{g}} et al. \cite{karabaug2020integrated, karabaug2024efficient} proposed POMDP-based models to optimize the integrated maintenance and spare parts selection problem for multi-component systems monitored by a single sensor. Similarly, Eggertsson \cite{eggertsson2025maintenance} examined a multi-component system monitored by a single sensor measuring a common condition parameter. By employing the multivariate totally positive of order two  assumption, they demonstrated that the optimal maintenance policy exhibits an at-most three-region structure when both inspection and replacement decisions are embedded within the maintenance action space.

\section{Model formulation}\label{sec_Model}
\subsection{System description}\label{sec_msystemdescriptions}
Let us consider the following notation: 

\begin{itemize} 
 
 \item Let $U_1$ denote the fully observable component of the system. Let $S_1 = 
\{0,1,\ldots,L_1\}$ denote its degradation state space where state 0 is the perfect working state 
and larger state indices represent more severe degradation. In particular, for any $j<j’ \in S_1$, 
state $j’$ is more degraded than state $j$. $L_1$ is its failure state. Its degradation evolution is 
assumed to be a homogeneous Markov chain $\{X_{1,n}\}$, with transition probability matrix 
${\bm Q}$. 
 
 \item Let $U_2$ denote the partially observable component of the system, whose 
evolution is also described by a homogeneous Markov chain, $\{X_{2,n}\}$, with state space 
$S_2 = \{0,1, \ldots, L_2\}$, where state 0 is perfect operation and $L_2$ denotes failure. State 
$i’$ is more degraded than state $i$ for $i <i’ \in S_2$. The true state of $U_2$ is not fully 
observable. Instead, we observe a signal $Z$ taking values in $O=\{0,1, \cdots, M\}$. The 
observation process is described by the observation matrix $\bm B$ , with $B(i, z)=P(Z_n=z \mid X_{2, 
n}=i), i\in S_2, z \in O$. We assume that $\{Z_n\}$ is a sequence of random variables 
conditionally independent given $\{X_{2,n}\}$. 
 
 \item The two components interact with positive dependence; for instance, when $U_1$ 
is in a a deteriorated state, $U_2$ is required to take on additional load. As a result, the state 
transition dynamics of $U_2$ is influenced by the operational state of $U_1$, with degradation 
occurring more rapidly. At each transition epoch, conditional on $U_1$ being in state $j$, the 
transition probability matrix of $U_2$ is governed by a matrix denoted ${ \bm P}^{(j)}$, $j \in 
S_1$.The entry $P^{(j)}(i,i')$ represents the probability that $U_2$ transitions to state $i’$ given 
that the current states of $U_1$ and $U_2$ are $j$ and $i$, respectively, $j\in S_1$, and $i,i’ \in 
S_2$. 
\end{itemize} 
\medskip

\subsection{Formulation of the POMDP}

Let $\bm \pi$ be the belief state of component $U_2$, denoted as 
$\bm \pi=( \pi(0),\ldots,  \pi(L_2))$, $\bm \pi \in \mathcal A=\{\bm \pi\in {\mathbb R}^{L_2+1}: \sum_{i=0}^{L_2}\pi(i)=1\}$.
At each inspection time, the decision-maker can choose maintenance actions  based on 
 $(\bm \pi, j)$, with $\bm \pi$ as the belief state of component $U_2$, $j$ is the  observable state of component $U_1$ upon the inspection.
At each inspection epoch, the decision-maker must select one of the following possible options when component $U_1$ is in the non-failed state, $i.e.$:
 \begin{itemize}
     \item $a=0$: do nothing and continue the system operation till the next inspection;
     \item $a=1$: only replace component $U_1$;
     \item $a=2$: only replace component $U_2$;
     \item $a=12$: replace the two components.   
 \end{itemize}
 Otherwise, only actions $a=1$ and $a=12$ are possible because component $U_1$ should be replaced if it is failed.
 The cost structure is as follows.
 \begin{itemize}
     \item  $\bm c_{o,k}$: the operating cost vector for component $U_k$, where the $i$th element
     $c_{o,k}(i)$ is the operating cost of component $U_k$ in state $i$,  and is assumed to be finite and non-decreasing in $i$, $i\in S_k$, $k\in \{1, 2\}$;  
     \item $c_s$: the maintenance set-up cost;
     \item $c_{r,k}$; the replacement cost of component $U_k, k\in \{1, 2\}$.
 \end{itemize}
All the above cost units are assumed to be nonnegative and finite. 
 When do-nothing is selected,  the probability of observing $(k,z)\in 
 S_1\times O$ at the next observation given current state $(\bm \pi, j)$ is
 \begin{equation}\label{Eq_Pfull}
     P( k,z\mid \bm \pi, j)
    =Q(j, k)(\bm \pi \bm P^{(j)}\bm B)_z,
 \end{equation}
  
 with the updated belief probability distribution of $U_2$ as 
 \begin{eqnarray}\label{eq_posterior}
 (T(\bm \pi, j; z))_{i'}=
\frac{\sum_{i\in  S_2} \pi(i)P^{(j)}{(i, i')} B{(i',z)}}
{(\bm \pi \bm P^{(j)}\bm B)_z}.
 \end{eqnarray}
 Notice that this expression does not depend on the state $k$ to be observed in component $U_1$. It follows from the conditional independence between the observed process of component $U_2$ and the evolution of component $U_1$.\\
 If replacement is selected,  the maintenance is performed at the end of the period with a charge of the set-up cost as well as the maintenance cost.
 Let $V(\bm \pi, j)$ be the optimal expected total discounted cost in the long-run horizon. It satisfies the following Bellman equations. \\
For $j<L_1$, meaning that  component $U_1$ is operational, then all the above four actions are accessible, and 
 \begin{eqnarray}
     V(\bm \pi, j)=
     \text{min}\big\{\Gamma^0V(\bm \pi, j),
     \Gamma^1 V(\bm \pi, j),
     \Gamma^2V(\bm \pi, j),
     \Gamma^{12} V(\bm \pi, j)\big\}.
 \end{eqnarray}

When component $U_1$ fails, it should be replaced, in which case we have that
\begin{eqnarray}
    V(\bm \pi, L_1)=\text{min}
\big\{\Gamma^1V(\bm \pi, L_1), \Gamma^{12}V(\bm \pi, L_1)\big\},
\end{eqnarray}

 where 
 \begin{eqnarray}
 \Gamma^0V(\bm \pi, j)=
 c_o(\bm \pi, j)+
 \gamma \sum_{(k,z)\in \Theta}
 P(k,z \mid \bm \pi, j)V(T(\bm \pi ,j; z), k),
 \end{eqnarray}
 
  \begin{eqnarray}\label{eq_gamma1}
 	\Gamma^1V(\bm \pi, j)=c_o(\bm \pi, j)+c_s+c_{r,1}+
 	\gamma\sum_{z \in O}(\bm \pi \bm P^{(j)}\bm B)_z V(T(\bm \pi, j; z), 0),
 \end{eqnarray}

 \begin{eqnarray}
     \Gamma^2V(\bm \pi, j)=c_o(\bm \pi, j)+c_s+c_{r,2}
     +
     \gamma \sum_{k\in S_1}
 Q(j, k)V(\bm e^0, k),
 \end{eqnarray}

 \begin{eqnarray}\label{eq_gamma12}
     \Gamma^{12}V(\bm \pi, j)=c_o(\bm \pi, j)+c_s+c_{r,1}+c_{r,2}+
     \gamma V(\bm e^0, 0). 
 \end{eqnarray}
 $c_o(\bm \pi, j)=
\sum_{i\in  S_2} \pi(i) c_{o,2}(i)+ c_{o,1}(j)$,
$\bm e^0=(1, 0, \cdots, 0) \in \mathcal A.$ $\gamma$
is the discount factor, $0<\gamma<1.$

\section{Main structural properties}\label{sec_property}

In this section, structural results for our model are derived. We first show that the value function is  concave in $\bm \pi$, the belief vector of component $U_2$.
We then provide a brief introduction to several stochastic orders—including the monotone likelihood ratio (MLR) order, first-order stochastic order, total positivity of order 2 ($TP_2$), and copositive order—which will be used as conditions to establish the structural properties of the maintenance policy. Finally, we demonstrate the monotonicity and policy structure of the model.

\subsection{Preliminaries}

\begin{theorem}
  For a fixed $j$, $V(\bm \pi, j)$ is a concave function in $\bm \pi.$
\end{theorem}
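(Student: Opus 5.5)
We argue by value iteration. Define the operator $\mathcal T$ on bounded functions $W(\bm\pi,j)$, $\bm\pi\in\mathcal A$, $j\in S_1$: for $j<L_1$ set $\mathcal TW=\min\{\Gamma^0W,\Gamma^1W,\Gamma^2W,\Gamma^{12}W\}$, and for $j=L_1$ set $\mathcal TW=\min\{\Gamma^1W,\Gamma^{12}W\}$. The costs are finite and $0<\gamma<1$, so $\mathcal T$ is a $\gamma$-contraction in the sup norm. Hence $V_{n+1}=\mathcal T V_n$ with $V_0\equiv 0$ converges uniformly to $V$. Pointwise limits of concave functions are concave. It therefore suffices to show that $\mathcal T$ preserves the class of functions that are concave in $\bm\pi$ for each fixed $j$. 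Here $V_0$ is trivially concave, and a minimum of concave functions is concave, so we only need each $\Gamma^a W(\cdot,j)$ to be concave whenever $W(\cdot,k)$ is concave for every $k$.

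The terms that do not involve the belief update are easy. The immediate cost $c_o(\bm\pi,j)=\bm\pi\cdot\bm c_{o,2}+c_{o,1}(j)$ is affine in $\bm\pi$. In $\Gamma^2$ and $\Gamma^{12}$ the future term is a constant in $\bm\pi$ (it involves only $V(\bm e^0,k)$), so both are affine. This leaves the continuation terms of $\Gamma^0$ and $\Gamma^1$. By \eqref{Eq_Pfull} and \eqref{eq_posterior}, these have the form
\[
\sum_{z\in O}(\bm\pi\bm P^{(j)}\bm B)_z\,G\big(T(\bm\pi,j;z)\big),
\]
where $G=\sum_k Q(j,k)W(\cdot,k)$ for $\Gamma^0$ and $G=W(\cdot,0)$ for $\Gamma^1$. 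In both cases $G$ is concave, since a nonnegative combination of concave functions is concave. Note that the sum over $k$ factorizes because $T$ does not depend on $k$.

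The main step is to show that $\bm\pi\mapsto\sigma_z(\bm\pi)\,G(\bm y_z(\bm\pi)/\sigma_z(\bm\pi))$ is concave on the simplex. Here $\bm y_z(\bm\pi)$ is the vector with entries $\sum_i\pi(i)P^{(j)}(i,i')B(i',z)$, which is linear in $\bm\pi$, and $\sigma_z(\bm\pi)=\bm y_z(\bm\pi)\cdot\bm 1$. This is the perspective of a concave function composed with a linear map. To prove it directly, take $\bm\pi=\lambda\bm\pi_1+(1-\lambda)\bm\pi_2$. Linearity gives $\bm y_z(\bm\pi)=\lambda\bm y_z(\bm\pi_1)+(1-\lambda)\bm y_z(\bm\pi_2)$, and hence
\[
T(\bm\pi,j;z)=\frac{\lambda\sigma_z(\bm\pi_1)}{\sigma_z(\bm\pi)}\,T(\bm\pi_1,j;z)+\frac{(1-\lambda)\sigma_z(\bm\pi_2)}{\sigma_z(\bm\pi)}\,T(\bm\pi_2,j;z).
\]
The two weights are nonnegative and sum to one. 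Applying concavity of $G$ and multiplying through by $\sigma_z(\bm\pi)$ gives the required inequality. We must also handle the case $\sigma_z(\bm\pi_\ell)=0$. If $\sigma_z(\bm\pi_\ell)=0$, the corresponding term has zero weight and $T(\bm\pi_\ell,j;z)$ may be defined arbitrarily. Since $G$ is bounded, we use the convention $0\cdot G(\cdot)=0$. Summing over $z$ and adding the affine cost gives concavity of $\Gamma^0W$ and $\Gamma^1W$.

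Combining these steps, $\mathcal T$ preserves concavity, every $V_n(\cdot,j)$ is concave, and so is the limit $V(\cdot,j)$. We expect the perspective-function step to be the only real obstacle, specifically its bookkeeping when $\sigma_z$ vanishes. An alternative route we could use instead is the standard Sondik argument: by induction, each $V_n(\cdot,j)$ is a minimum of finitely many linear functions in $\bm\pi$.
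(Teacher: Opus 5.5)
Your proof is correct and follows the same route as the paper. The paper also runs value iteration from $V_0\equiv 0$, notes that $\Gamma^2$ and $\Gamma^{12}$ are affine in $\bm\pi$, and gets concavity of the continuation terms of $\Gamma^0$ and $\Gamma^1$ from the reweighting $T(\bm\pi,j;z)=\beta\, T(\bm\pi^1,j;z)+(1-\beta)\,T(\bm\pi^2,j;z)$ with $\beta=\alpha\,\bm\pi^1 C(j,z)/\bm\pi C(j,z)$; it then uses closure under minima and passes to the limit. Your write-up is somewhat cleaner than the paper's, since you factor the sum over $k$ (using that $T$ does not depend on $k$), make the $z$-dependence of the weight explicit, and handle the case of a vanishing normalizer $\sigma_z$.
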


\begin{proof}
   We prove the concavity by mathematical induction. Let $V_0(\bm \pi, j)=0$ for all $(\bm \pi, j)$ initially, then the concavity holds.
   Assume that the concavity holds at iteration $r$, we will show that is holds at iteration $r+1.$
  It is easily seen that both $\Gamma^{12}V_r$ and $\Gamma^{2}V_r$ are affine functions in $\bm \pi$. Hence, they preserve the concavity.
   Let $(A(j, z))_{m\times n}=P^{(j)}(m, n)B(n, z)$ and $C(j, z)$ a column vector defining as  
  $(C(j, z))_m=\sum_{n=0}^{L_2}P^{(j)}(m, n)B(n, z).$
   Let $W_{r+1}(\bm \pi, j)=\sum_{(k, z)\in \Theta}
 P(k, z\mid \bm \pi, j)V_r(T(\bm \pi ,j; z), k),$ we will show that given $j$, $W_{r+1}(\bm \pi, j)$ is concave in $\bm \pi$.

 Let $\bm \pi=\alpha \bm \pi^1+(1-\alpha) \bm \pi^2$, $0\le \alpha\le 1$,  then
$
T(\bm \pi, j; z)=\frac{\alpha \bm \pi^1 A(j, z)+(1-\alpha) \bm \pi^2 A(j, z)}{\bm \pi C(j, z)}
    =\beta \frac{\bm \pi^1 A(j, z)}{\bm \pi^1 C(j, z)}+(1-\beta)\frac{\bm \pi^2 A(j, z)}{\bm \pi^2 C(j, z)}
    =\beta T(\bm \pi^1, j; z)+(1-\beta) T(\bm \pi^2, j; z),$
 where $\beta=\frac{\alpha \bm \pi^1 C(j, z)}{\bm \pi C(j, z)}.$
By induction, we have that
$
V_r(\bm \pi,j) \ge \beta V_r(\bm\pi^1,j) + (1-\beta)V_r(\bm\pi^2,j).
$ By multiplying $Q(j,k)(\bm \pi \bm P^{(j)}\bm B)_z$ and summing over $(k, z)$ we have 
\begin{eqnarray*}\label{eq_Wr}
W_{r+1}(\bm \pi, j)&=&\sum\limits_{(k,z)\in \Theta}
Q(j,k)(\bm \pi \bm P^{(j)}\bm B)_z V_r(T(\bm \pi, j; z),k)\\
&\ge&
\sum\limits_{(k,z)\in \Theta}
\beta Q(j,k)(\bm \pi \bm P^{(j)}\bm B)_z V_r(T(\bm \pi, j; z),k)+
(1-\beta)Q(j,k)(\bm \pi \bm P^{(j)}\bm B)_z V_r(T(\bm \pi, j; z),k).    \end{eqnarray*}
As
$
\bm\pi C(j,z)\beta = \alpha \bm \pi^1 C(j,z),$ we further have
$W_{r+1}(\bm \pi, j)\ge \alpha W_{r+1}(\bm \pi^1, j)+(1-\alpha) W_{r+1}(\bm \pi^2,j).$
Hence, $W_{r+1}(\bm \pi, j)$ is a concave function in $\bm \pi, $ implying that 
 $\Gamma^{0}V_r$ is concave of $\bm \pi$ 
 as
$
\Gamma^0 V_r(\bm\pi,j) = c_0(\bm\pi,j) + \gamma W_{r+1}(\bm\pi,j)
$
and  $c_0(\bm \pi,j)$ is linear in $\bm \pi$.  Similarly, $\Gamma^1V_r$ is concave of $\bm \pi$.
 $V_{r+1}(\bm \pi, j)$ is concave of $\bm \pi$ as 
 the minimum operator preserves concavity.
For a POMDP with finite state and observation spaces, bounded one-step costs, and a discount factor
$0<\gamma<1$, the optimal value function exists and is unique. Moreover,
 $V(\bm \pi, j)=\lim_{r\rightarrow \infty}V_r(\bm \pi, j)$ is concave in $\bm \pi$.  
\end{proof}

Before establishing the structural properties of the model, we introduce several stochastic orders that will be used later.

\begin{definition}\label{def_MLR}
For two  belief state vectors $\bm \pi^1$ and $\bm \pi^2$, 
 we say $\bm \pi^2$ dominates $\bm \pi^1$ in the 
Monotone 
Likelihood Ratio (MLR) order, noted as  
$\bm \pi^1 \le_{\text{r}} \bm \pi^2$ if $ \pi^1(i)   \pi^2(j)\geq  \pi^1(j) \pi^2(i)$
for $i<j.$
\end{definition}
A key property of MLR order is that it is closed under conditional expectations and Bayes’ rule, which makes it particularly well suited for sequential decision-making and filtering problems.
\begin{definition}\label{def_Storder}
For two belief state $1\times N$ vectors $\bm \pi^1$ and $\bm \pi^2$, 
 $\bm \pi^2$ is said to first-order
stochastically dominates
$\bm \pi^1$, noted as  
$\bm \pi^1 \le_{\text{s}} \bm \pi^2$ if 
$\sum_{i=l}^{N}  \pi^1(i) 
\leq \sum_{i=l}^{N}  \pi^2(i)$
for all $l\in  \{1, \cdots, N\}.$
\end{definition}
The first-order is a partial order on probability vectors and is weaker than the MLR order. Specifically, if
$\bm \pi^1\le_r \bm \pi^2$, then
$\bm \pi^1\le_s \bm \pi^2$.

\begin{definition}
    A stochastic matrix $\bm A$
    is  totally positive of order 2 ($TP_2$)
    if the determinant 
    $$
\begin{bmatrix}
A(i,j) & A(i,j') \\
A(i',j) & A(i',j')
\end{bmatrix}
\ge 0
$$
for all $i\le i', j\le j'$.  
\end{definition}
$TP_2$ property is widely considered in reliability and condition-based maintenance, as it captures the intuitive notion that components in more degraded states are more likely to transition to even worse states.

\begin{definition}\cite{krishnamurthy2014reduced}
  Given two $N\times N$
  stochastic matrices  $P(u)$ and $P(u+1)$, we say that $P(u) \le_c P(u+1)$ if the sequence of $N\times N$ matrices $E^{j,u}$, $j=1, \cdots, N-1$  are copositive, $i.e.$, 
  for each $j$,
\begin{equation*}
\bm \pi E^{j,u} \bm \pi' \ge 0, \quad 
\quad 
\end{equation*}
where
\begin{eqnarray*}
E^{j,u} &=& \frac{1}{2} \left[ e^{j,u}_{mn} + e^{j,u}_{nm} \right]_{N \times N},
\\
\quad
e^{j,u}_{mn} &=& P_{m,j}(u) P_{n,j+1}(u+1) - P_{m,j+1}(u) P_{n,j}(u+1).
\end{eqnarray*}
$\bm \pi$ is any belief vector in $\mathbb{R}^{1 \times N}$, 
and $\bm \pi'$ denotes its transpose. 
Moreover, $P_{m,j}(u)$ represents the $(m,j)$-th entry of $P(u)$. 
\end{definition}

Copositive order provides a partial order framework for comparing probability distributions and stochastic kernels in a way that is preserved under Bayesian updating. Intuitively, copositive order captures the idea that “larger” states reinforce one another under uncertainty: if one belief or transition kernel dominates another in this order, then the resulting posterior distributions after incorporating observations remain ordered in the same sense.
More details of copositive order can be found in \cite{krishnamurthy2014reduced}.

We make the following assumptions regarding the stochastic orders that the system parameters are required to satisfy.

Under the following  Assumption 1, conditional on the degradation state of component $U_1$, component $U_2$ in a worse degradation state is more likely to transition to even worse states in the future.
\begin{assumption}\label{assumption_P1}
$\forall j\in S_1$, $\bm P^{(j)}$ is $TP_2.$
\end{assumption}

 Assumption 2 requires a copositive order of the transition kernel of component $U_2$ across all states of component $U_1$. Intuitively, this ensures that the belief over the state of component $U_2$ preserves its order under Bayesian updating.
\begin{assumption}\label{assumption_P2}
$\forall j \in \{0, \cdots, L_1-1\}$, 
$\bm P^{(j)}\le_c \bm P^{(j+1)} $
\end{assumption}

Assumptions 3  implies that, for component $U_1$, worse degradation states are more likely to transition to even worse states.

\begin{assumption}\label{assumption_Q}
The transition probability matrix of  $\bm Q$ is $TP_2$.
\end{assumption}

Assumptions 4 implies that for component $U_2$, worse degradation states are more likely to
generate stochastically larger observations.
\begin{assumption}\label{assumption_B}
The observation probability matrix   $\bm B$ is $TP_2$.
\end{assumption}
 
Under the above stochastic orders, we can establish the following results.

\begin{lemma}\label{lemma_kernal1}
  Given $j$ and $k$, if $\bm \pi^1\le_r \bm \pi^2$, then $P( k, \cdot \mid \bm \pi^1, j)\le_r P(k, \cdot\mid \bm \pi^2, j)$. 
\end{lemma}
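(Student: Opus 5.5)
Fix $j$ and $k$. By \eqref{Eq_Pfull},
\[
P(k,z\mid \bm\pi,j)=Q(j,k)\,(\bm\pi\bm P^{(j)}\bm B)_z .
\]
As a function of $z\in O$, this is the distribution $\bm\pi\bm P^{(j)}\bm B$ multiplied by the constant $Q(j,k)\ge 0$, which does not depend on $z$. The MLR inequality of Definition~\ref{def_MLR} involves the products $\pi^1(z)\pi^2(z')$ and $\pi^1(z')\pi^2(z)$, and both pick up the same factor $Q(j,k)^2$. So the order (read for nonnegative vectors that need not sum to one) is unaffected by this scaling; if $Q(j,k)=0$ both sides vanish and the claim is trivial. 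The statement therefore reduces to showing that
\[
\bm\pi^1\le_r\bm\pi^2 \;\Longrightarrow\; \bm\pi^1\bm P^{(j)}\bm B\le_r \bm\pi^2\bm P^{(j)}\bm B .
\]

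I will prove this in two steps, using the classical fact that multiplication on the right by a $TP_2$ stochastic matrix preserves the MLR order. This fact is the main ingredient, and I will prove it directly.

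Let $\bm A$ be $TP_2$ and $\bm\pi^1\le_r\bm\pi^2$. For $z<z'$ I need
\[
\sum_{m,n}\pi^1(m)\pi^2(n)\bigl[A(m,z)A(n,z')-A(m,z')A(n,z)\bigr]\ge 0 .
\]
Write $D_{mn}=A(m,z)A(n,z')-A(m,z')A(n,z)$. Then $D_{mm}=0$ and $D_{nm}=-D_{mn}$, so the diagonal terms vanish and the pairs $m<n$ combine into
\[
\sum_{m<n}\bigl[\pi^1(m)\pi^2(n)-\pi^1(n)\pi^2(m)\bigr]D_{mn}.
\]
For $m<n$ the bracket is $\ge 0$ by the MLR hypothesis, and $D_{mn}\ge 0$ is exactly the $TP_2$ minor condition with rows $m<n$ and columns $z<z'$. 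Hence every term is nonnegative and the sum is too.

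Applying this with $\bm A=\bm P^{(j)}$, which is $TP_2$ by Assumption~\ref{assumption_P1}, gives $\bm\pi^1\bm P^{(j)}\le_r\bm\pi^2\bm P^{(j)}$. Applying it again to these two vectors with $\bm A=\bm B$, which is $TP_2$ by Assumption~\ref{assumption_B}, gives the desired ordering of $\bm\pi\bm P^{(j)}\bm B$. Multiplying back by $Q(j,k)$ completes the proof.

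The only delicate point is the antisymmetric rearrangement above, where the diagonal terms must cancel and the off-diagonal terms pair up correctly. Assumptions~\ref{assumption_P2} and~\ref{assumption_Q} are not needed here, since $j$ and $k$ are held fixed.
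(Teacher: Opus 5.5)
Your proof is correct and follows essentially the paper's route: both reduce the claim to the fact that right-multiplication by a $TP_2$ matrix preserves the MLR order, using Assumptions~\ref{assumption_P1} and~\ref{assumption_B}. The paper instead uses closure of $TP_2$ under products to get $\bm P^{(j)}\bm B$ $TP_2$ and then applies the preservation fact once, without proof. You apply it twice, prove it directly by the antisymmetrization argument, and make the harmless scaling by $Q(j,k)$ explicit, which the paper leaves implicit.
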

\begin{proof}
When $\bm P^{(j)}$ and $\bm B$ are $TP_2$, then $\bm P^{(j)}\bm B$ is $TP_2$. Hence, 
  $\bm \pi^1\bm P^{(j)}\bm B \le_r \bm \pi^2\bm P^{(j)}\bm B$ for $\bm \pi^1 \le_r \bm \pi^2,$
  implying that  $P(k, \cdot \mid \bm \pi^1, j) \le_r P(k, \cdot\mid \bm \pi^2, j)$.
\end{proof}

Lemma \ref{lemma_kernal1} presents the MLR order of the one-step conditional observation probability vector $P(\cdot, \cdot \mid \bm \pi, j )$ in Equation \eqref{Eq_Pfull}. It means that, given the same current state of component $U_1$ and its next observation, if the belief of $U_2$ is ordered according to the MLR order, then its
future observation 
will also follow an MLR order.

\begin{lemma}\label{lemma_kernal2}
Given $\bm \pi$ and $z$,  $P(\cdot, z \mid \bm \pi, j)\le_r P(\cdot,z
\mid \bm \pi, l)$ for $j\le l \in S_1$. 
\end{lemma}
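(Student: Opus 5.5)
The plan is to use the product structure of the kernel in \eqref{Eq_Pfull}. For fixed $\bm \pi$ and $z$, the vector $k\mapsto P(k,z\mid \bm \pi, j)=Q(j,k)(\bm \pi \bm P^{(j)}\bm B)_z$ is the $j$-th row of $\bm Q$ multiplied by the nonnegative scalar $c_j:=(\bm \pi \bm P^{(j)}\bm B)_z$. This scalar does not depend on $k$. The MLR order in Definition~\ref{def_MLR} is a condition on cross products, so it is unchanged when either vector is rescaled by a positive constant. The statement should therefore reduce to comparing rows $j$ and $l$ of $\bm Q$, and Assumption~\ref{assumption_Q} should give that comparison directly.

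The key steps, in order, are as follows. Fix $j\le l$ in $S_1$ and $k<k'$ in $S_1$. We must show
\begin{equation*}
Q(j,k)\,c_j\,Q(l,k')\,c_l \;\ge\; Q(j,k')\,c_j\,Q(l,k)\,c_l .
\end{equation*}
Factor out the common nonnegative quantity $c_jc_l$. The inequality then becomes $Q(j,k)Q(l,k')-Q(j,k')Q(l,k)\ge 0$, which is exactly the $2\times 2$ minor of $\bm Q$ with rows $j\le l$ and columns $k<k'$. This minor is nonnegative because $\bm Q$ is $TP_2$ (Assumption~\ref{assumption_Q}). When $j=l$ the minor is zero and the inequality holds with equality.

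One can also phrase the result for normalized distributions by dividing each vector by its total mass $c_j=\sum_k P(k,z\mid\bm\pi,j)$, using $\sum_k Q(j,k)=1$. The normalized vectors are then the rows $Q(j,\cdot)$ and $Q(l,\cdot)$. Under $TP_2$, a row with larger index dominates in the MLR order, so $Q(j,\cdot)\le_r Q(l,\cdot)$.

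The main point to handle carefully is the degenerate case $c_j=0$ or $c_l=0$, where normalization is impossible. In that case both sides of the cross-product inequality vanish, so it holds trivially. This degenerate case is also harmless for the later use of the lemma, because zero-probability observations contribute nothing to the expectations in $\Gamma^0V$. Finally, note that Assumptions~\ref{assumption_P1}, \ref{assumption_P2} and \ref{assumption_B} are not needed here: only the $TP_2$ property of $\bm Q$ is used, since the $U_2$-part enters only as the scalar factors $c_j$ and $c_l$.
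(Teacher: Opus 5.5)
Your proof is correct and is essentially the paper's argument: the paper also factors the $2\times 2$ cross-product determinant as $(\bm \pi \bm P^{(j)}\bm B)_z(\bm \pi \bm P^{(l)}\bm B)_z$ times the minor of $\bm Q$ with rows $j\le l$, which is nonnegative by Assumption~\ref{assumption_Q}. It then reads off the MLR order from the $TP_2$ property of the two stacked rows (citing Definition 4.2 of Ohnishi et al.), where you check the cross-product inequality of Definition~\ref{def_MLR} directly; your remarks on the degenerate case and on using only the $TP_2$ property of $\bm Q$ are accurate.
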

\begin{proof}
  Given  $\bm \pi \in \mathcal A$ and $z\in O$, for $j\le l\in S_1$ and $m\leq n \in S_1$, we have that
  \begin{eqnarray}
\nonumber && \begin{bmatrix}
Q(j, m)(\bm \pi \bm P^{(j)}\bm B)_z  & Q(j, n)(\bm \pi \bm P^{(j)}\bm B)_z\\
Q(l, m)(\bm \pi \bm P^{(l)}\bm B)_z  & Q(l, n)(\bm \pi \bm P^{(l)}\bm B)_z\\
\end{bmatrix}\\\notag
&&=
  \begin{bmatrix}
Q(j, m) & Q(j, n)\\
Q(l, m)& Q(l, n)\\
\end{bmatrix}(\bm \pi \bm P^{(j)}\bm B)_z(\bm \pi \bm P^{(l)}\bm B)_z
\geq 0.
\label{eq:conditiaoncross}\\
&& 
\end{eqnarray}
Hence, the matrix
$$
\begin{bmatrix}
P(\cdot, z \mid \bm \pi, j) \\
P(\cdot,z
\mid \bm \pi, l)
\end{bmatrix}
$$
is $TP_2$. From the
Definition 4.2 in  \cite{ohnishi1986optimal},
we conclude that 
$P(\cdot,z
\mid \bm \pi, j)\le_r
P(\cdot,z
\mid \bm \pi, l)
$.
 \end{proof}

 Lemma \ref{lemma_kernal2}
 shows that given the current belief vector of $U_2$ and the next observation of $U_2$,
 if the system is currently in a less degraded state for $U_1$, then
 the future observation of $U_1$ is expected to be smaller in the MLR order.
 
\begin{lemma}\label{lemma_kernel3}
 Given $\bm \pi$ and $k$,  $P(k, \cdot  \mid \bm \pi, j)\le_r P(k, \cdot \mid \bm \pi, l)$ for $j\le l \in S_1$.
 \end{lemma}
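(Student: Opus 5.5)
The plan is to see that, for fixed $k$, the vector $P(k,\cdot\mid\bm\pi,j)=Q(j,k)\,(\bm\pi\bm P^{(j)}\bm B)$ is the row vector $\bm\pi\bm P^{(j)}\bm B$ multiplied by the nonnegative scalar $Q(j,k)$. The MLR order of Definition~\ref{def_MLR} is stated through cross-products $\pi^1(i)\pi^2(i')\ge\pi^1(i')\pi^2(i)$. Multiplying either vector by a nonnegative constant multiplies both sides of every such inequality by that constant. So it suffices to show
\[
\bm\pi\bm P^{(j)}\bm B\le_r \bm\pi\bm P^{(l)}\bm B,\qquad j\le l .
\]
The degenerate case $Q(j,k)=0$ or $Q(l,k)=0$ is trivial, because then all cross-products vanish and the inequalities hold with equality.

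The first step is to establish $\bm\pi\bm P^{(j)}\le_r\bm\pi\bm P^{(j+1)}$ for every belief $\bm\pi$ and every $j<L_1$. This is exactly what the copositive order in Assumption~\ref{assumption_P2} encodes. Take the $2\times2$ minor of the stacked rows $\bm\pi\bm P^{(j)}$ and $\bm\pi\bm P^{(j+1)}$ at adjacent columns $i,i+1$. It equals
\[
\sum_{m,n}\pi(m)\pi(n)\bigl[P^{(j)}(m,i)P^{(j+1)}(n,i+1)-P^{(j)}(m,i+1)P^{(j+1)}(n,i)\bigr]=\bm\pi E^{i,j}\bm\pi',
\]
where the symmetrization in $E^{i,j}$ does not change the quadratic form. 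This quantity is nonnegative by copositivity, as in \cite{krishnamurthy2014reduced}.

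Nonnegativity of adjacent-column cross-products gives the full MLR inequality for all $i<i'$ by chaining the ratios. Care is needed at zero entries, which I expect to be the main technical nuisance. To handle them, I would either invoke the standard equivalence from \cite{krishnamurthy2014reduced} directly or argue on the support, where the ratios are well defined.

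The next step is to pass from adjacent indices to arbitrary $j\le l$ using transitivity of $\le_r$. This gives $\bm\pi\bm P^{(j)}\le_r\bm\pi\bm P^{(l)}$. Transitivity again needs the usual care with zero components; I would handle it as in the previous step, or assume strictly positive entries and pass to the limit.

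The final step is to post-multiply by $\bm B$, which is $TP_2$ by Assumption~\ref{assumption_B}. It is a classical fact (Karlin's composition formula, already used in the proof of Lemma~\ref{lemma_kernal1}) that if $\bm\mu^1\le_r\bm\mu^2$ and $\bm B$ is $TP_2$, then $\bm\mu^1\bm B\le_r\bm\mu^2\bm B$. The reason is that the $2\times2$ minors of the stacked product are sums of products of nonnegative minors of $\begin{bmatrix}\bm\mu^1\\ \bm\mu^2\end{bmatrix}$ and of $\bm B$. Applying this with $\bm\mu^1=\bm\pi\bm P^{(j)}$ and $\bm\mu^2=\bm\pi\bm P^{(l)}$ yields the required ordering. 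Restoring the scalar factors $Q(j,k)$ and $Q(l,k)$ then gives $P(k,\cdot\mid\bm\pi,j)\le_r P(k,\cdot\mid\bm\pi,l)$.

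The main obstacle is the first step: correctly identifying the copositivity condition with the MLR comparison of the one-step predicted beliefs, including the chaining from adjacent columns to all pairs. The remaining steps are routine closure properties of the MLR order.
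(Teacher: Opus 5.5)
Your proposal follows the paper's proof: the paper also factors out $Q(j,k)Q(l,k)$, takes $\bm\pi\bm P^{(j)}\le_r\bm\pi\bm P^{(l)}$ from Assumption~\ref{assumption_P2}, and pushes it through the $TP_2$ matrix $\bm B$. You additionally spell out what the paper takes for granted: the identification of copositivity with adjacent likelihood ratios, the chaining over indices, and transitivity, which does hold for probability vectors.

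One caution on the step you flagged. Nonnegative adjacent cross-products do not give the all-pairs MLR inequality when both vectors share an interior zero. For example, $(\tfrac12,0,\tfrac12)$ versus $(1,0,0)$ arises as $\bm e^0\bm P^{(0)}$, $\bm e^0\bm P^{(1)}$ for $TP_2$ matrices with an identically zero middle column, and these matrices satisfy the copositivity condition trivially. With $\bm B$ the identity, the lemma's conclusion actually fails there. So ``arguing on the support'' cannot close this step; a positivity or nondegeneracy assumption is needed. The paper's one-line argument has the same gap.
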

 \begin{proof}
 For $z\le \tilde z \in O$, we have 
  \begin{eqnarray}\label{eq_prooflemma1}
 \begin{bmatrix}
Q(j, k)((\bm \pi \bm P^{(j)} \bm B)_z  & Q(j, k)(\bm \pi \bm P^{(j)}\bm B)_{\tilde z}\\
Q{(l, k)}((\bm \pi \bm  P^{(l)} \bm B)_z  & Q(l, k)(\bm \pi \bm P^{(l)} \bm B)_{\tilde z}\\
\end{bmatrix}=Q(j,k)Q(l, k)
  \begin{bmatrix}
\bm (\bm \pi \bm P^{(j)}\bm B)_z  & (\bm \pi \bm P^{(j)}\bm B)_{\tilde z}\\
\bm (\bm \pi \bm P^{(l)}\bm B)_z  & (\bm \pi \bm P^{(l)}\bm B)_{\tilde z}\\
\end{bmatrix}.
\end{eqnarray}
With Assumption \ref{assumption_P2}, we know that $\bm \pi \bm P^{(j)}\le_r \bm \pi \bm P^{(l)}$,
as $\bm B$ is $TP_2$, hence  $\bm \pi \bm P^{(j)}\bm B\le_r \bm \pi \bm P^{(l)}\bm B$, implying that Equation \eqref{eq_prooflemma1} is non-negative.
\end{proof}

Lemma \ref{lemma_kernel3} shows that given the same current belief of $U_2$
and the future state of $U_1$, if the system is currently in a less degraded state for $U_1$,
then
the future observation of $U_2$ is also expected to be smaller in the MLR order. The following lemmas examine the properties of the posterior belief probability distribution of $U_2$ given in Equation \eqref{eq_posterior}.

\begin{lemma}\label{lemma_beliefz}
  Given $\bm \pi$ and $j$, $T(\bm \pi, j; z)\le_r T(\bm \pi, j; \tilde z)$ for $z\le \tilde z \in O$.
\end{lemma}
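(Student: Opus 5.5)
We argue directly from the explicit form of the posterior in Equation \eqref{eq_posterior}. Fix $\bm \pi$ and $j$, and write $\bm \rho=\bm \pi \bm P^{(j)}$, i.e. $\rho(i')=\sum_{i\in S_2}\pi(i)P^{(j)}(i,i')$. This is the predicted (prior) distribution of $U_2$ at the next epoch, and it does not depend on $z$. Then
$$
(T(\bm \pi, j; z))_{i'}=\frac{\rho(i')B(i',z)}{(\bm \rho \bm B)_z}.
$$
So $T(\bm \pi,j;z)$ and $T(\bm \pi,j;\tilde z)$ are obtained from the same vector $\bm \rho$ by reweighting it with the $z$-th and $\tilde z$-th columns of $\bm B$, respectively, and then normalizing.

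By Definition \ref{def_MLR}, we must show that for $i<i'$ in $S_2$,
$$
(T(\bm \pi,j;z))_{i}\,(T(\bm \pi,j;\tilde z))_{i'}\ \ge\ (T(\bm \pi,j;z))_{i'}\,(T(\bm \pi,j;\tilde z))_{i}.
$$
Substituting the formula above, both sides share the positive factor $1/\big((\bm \rho \bm B)_z(\bm \rho \bm B)_{\tilde z}\big)$ and the nonnegative factor $\rho(i)\rho(i')$. After cancelling these, the inequality reduces to
$$
B(i,z)B(i',\tilde z)-B(i',z)B(i,\tilde z)\ge 0 \qquad (i<i',\ z\le \tilde z),
$$
which is exactly the $TP_2$ property of $\bm B$ in Assumption \ref{assumption_B}. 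If $\rho(i)\rho(i')=0$, both sides vanish and the inequality holds trivially. The case $i=i'$ or $z=\tilde z$ is also trivial.

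The only delicate point is well-definedness. The normalizers $(\bm \rho \bm B)_z$ and $(\bm \rho \bm B)_{\tilde z}$ must be positive; otherwise the posterior for that observation is undefined. As is standard, we restrict attention to observations $z$ with $P(k,z\mid\bm \pi,j)>0$, which are the only ones that enter the Bellman operator $\Gamma^0$ with nonzero weight. With this convention the argument uses only Assumption \ref{assumption_B}, and Assumptions \ref{assumption_P1}--\ref{assumption_Q} are not needed. I expect no real obstacle beyond this bookkeeping: the proof is the classical fact that Bayes' rule with a $TP_2$ likelihood is monotone in the observation, in the MLR sense.
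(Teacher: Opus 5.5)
Your proof is correct and matches the paper's argument: the paper also sets $\tilde{\bm\pi}=\bm\pi\bm P^{(j)}$ and factors the $2\times 2$ determinant of the two posteriors as $\tilde\pi(i)\tilde\pi(l)/\big((\tilde{\bm\pi}\bm B)_z(\tilde{\bm\pi}\bm B)_{\tilde z}\big)$ times a $2\times 2$ minor of $\bm B$. That minor is nonnegative by the $TP_2$ property in Assumption~\ref{assumption_B}, and your extra remarks on positive normalizers and the degenerate case $\rho(i)\rho(i')=0$ are harmless bookkeeping that the paper leaves implicit.
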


\begin{proof}
  Let $\bm {\tilde \pi}=\bm \pi\bm P^{(j)}$, then for $i \le l \in S_2$, 
  
   \begin{eqnarray}\label{eq_prooflemma}
 \begin{bmatrix}
T(\bm \pi, j; z)_i  &T(\bm \pi, j; z)_l\\
T(\bm \pi, j; \tilde z)_i  &T(\bm \pi, j; \tilde z)_l\\
\end{bmatrix}=
\frac{\tilde {\bm \pi}(i) \tilde {\bm \pi}(l)}
{(\tilde {\bm \pi} \bm B)_z (\tilde {\bm \pi} \bm B)_{\tilde z}}
  \begin{bmatrix}
 B({i, z})  & B({l, z})\\
 B({i, \tilde z})  & B({l, \tilde z})\\
\end{bmatrix}
\ge 0.
\end{eqnarray}
\end{proof}

Lemma \ref{lemma_beliefz} establishes that given the same current system state, the updated belief distribution is increasing in the MLR order with respect to future signal of $U_1$. 
  
 Lemma \ref{lemma_beliefpi} shows that, conditional on the current state of $U_1$
 and an observation of $U_2$ at the next transition, the posterior belief distribution of $U_2$
 is increasing in the MLR order as a function of the prior belief.
 
 \begin{lemma}\label{lemma_beliefpi}
   If $\bm \pi^1\le_r \bm \pi^2$, then $T(\bm \pi^1, j; z)\le_r T(\bm \pi^2, j; z)$, $(j, z)\in S_1 \times O.$
 \end{lemma}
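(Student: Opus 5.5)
The proof splits the Bayesian update \eqref{eq_posterior} into a prediction step and a correction step, and checks that each one preserves the MLR order. Fix $(j,z)\in S_1\times O$ and write $\tilde{\bm\pi}^k=\bm\pi^k\bm P^{(j)}$ for $k=1,2$. Then
\[
T(\bm\pi^k,j;z)_{i'}=\frac{\tilde\pi^k(i')B(i',z)}{(\tilde{\bm\pi}^k\bm B)_z}.
\]

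\textbf{Prediction step.} We first show that $\bm\pi^1\le_r\bm\pi^2$ implies $\tilde{\bm\pi}^1\le_r\tilde{\bm\pi}^2$. This is the standard fact that a $TP_2$ kernel preserves the MLR order, and $\bm P^{(j)}$ is $TP_2$ by Assumption \ref{assumption_P1}. The same fact was already used in the proof of Lemma \ref{lemma_kernal1}, so we can invoke it there.

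If a direct argument is wanted, take $i<l$ and expand
\[
\tilde\pi^1(i)\tilde\pi^2(l)-\tilde\pi^1(l)\tilde\pi^2(i)
=\sum_{m,n}\pi^1(m)\pi^2(n)\bigl[P^{(j)}(m,i)P^{(j)}(n,l)-P^{(j)}(m,l)P^{(j)}(n,i)\bigr].
\]
Pair the terms $(m,n)$ and $(n,m)$ with $m<n$. Each pair equals
\[
\bigl[\pi^1(m)\pi^2(n)-\pi^1(n)\pi^2(m)\bigr]\cdot\det\begin{bmatrix}P^{(j)}(m,i)&P^{(j)}(m,l)\\P^{(j)}(n,i)&P^{(j)}(n,l)\end{bmatrix}.
\]
The first factor is nonnegative by $\bm\pi^1\le_r\bm\pi^2$, and the determinant is nonnegative because $\bm P^{(j)}$ is $TP_2$. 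The diagonal terms $m=n$ vanish, so the whole sum is nonnegative. This pairing (the basic composition formula) is the only mildly delicate point.

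\textbf{Correction step.} For $i<l$ in $S_2$, compute the $2\times 2$ determinant of the two posteriors:
\[
\frac{B(i,z)B(l,z)}{(\tilde{\bm\pi}^1\bm B)_z(\tilde{\bm\pi}^2\bm B)_z}\,
\bigl[\tilde\pi^1(i)\tilde\pi^2(l)-\tilde\pi^1(l)\tilde\pi^2(i)\bigr].
\]
The factor in front is nonnegative, and the bracket is nonnegative by the prediction step. Hence $T(\bm\pi^1,j;z)\le_r T(\bm\pi^2,j;z)$. This mirrors the computation in Lemma \ref{lemma_beliefz}.

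\textbf{Degenerate cases.} Throughout, we assume the normalizing constants are positive, i.e.\ $z$ has positive probability under both priors; otherwise $T$ is undefined. Note also that Assumption \ref{assumption_B} is not needed in the correction step, because a single column of $\bm B$ acts as a diagonal reweighting, and multiplying both vectors entrywise by the same nonnegative weights preserves the likelihood-ratio inequality.
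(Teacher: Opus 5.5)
Your proof is correct and follows the same route as the paper. The paper's proof is exactly your correction-step factorization of the $2\times 2$ determinant of the posteriors into $B(i,z)B(l,z)/\bigl((\tilde{\bm\pi}^1\bm B)_z(\tilde{\bm\pi}^2\bm B)_z\bigr)$ times the determinant of the predicted beliefs; it asserts that the second determinant is nonnegative without comment, and your pairing argument using Assumption~\ref{assumption_P1} supplies the missing justification that $\bm\pi^1\bm P^{(j)}\le_r\bm\pi^2\bm P^{(j)}$.
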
 
  
  \begin{proof}
    Let $\bm \pi^1\le_r \bm \pi^2$, 
    $ {\tilde {\bm\pi}^{u}}=\bm \pi\bm^{u} P^{(j)}$, $u=1,2, j\in S_1.$
Then for 
    $i<l \in S_2$, we have 
\begin{eqnarray}\label{eq_prooflemma}
 \begin{bmatrix}
T(\bm \pi^1, j; z)_i  &T(\bm \pi^1, j; z)_l\\
T(\bm \pi^2, j; z)_i  &T(\bm \pi^2, j; z)_l\\
\end{bmatrix}=
\frac{B(i, z) B (l, z)}
{(\tilde {\bm \pi}^1 \bm B)_z(\tilde {\bm \pi}^2 \bm B)_z}
  \begin{bmatrix}
 \tilde { \pi}^1(i)  & \tilde { \pi}^1(l)\\
\tilde { \pi}^2(i) & \tilde { \pi}^2(l)\\
\end{bmatrix}
\ge 0.
\end{eqnarray}
    \end{proof}

Lemma \ref{lemma_beliefj} indicates that given the current belief $\bm \pi$ and an observation of $U_2$
at the next transition, the updated belief distribution is MLR-increasing with respect to the current state of $U_1$.

\begin{lemma}\label{lemma_beliefj}
   $T(\bm \pi, j; z)\le_r T(\bm \pi, j^+; z) $ for $j\le j^+ \in S_1.$
 \end{lemma}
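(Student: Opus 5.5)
The plan is to follow the same pattern as Lemma \ref{lemma_beliefpi}: write the posterior as a prediction step followed by a Bayes correction, and show that the correction preserves MLR order. Fix $\bm \pi$ and $z$, and set $\tilde{\bm \pi}^{(j)}=\bm \pi \bm P^{(j)}$ and $\tilde{\bm \pi}^{(j^+)}=\bm \pi \bm P^{(j^+)}$. By \eqref{eq_posterior},
\[
T(\bm \pi, j; z)_{i'}=\frac{\tilde \pi^{(j)}(i')B(i',z)}{(\tilde{\bm \pi}^{(j)}\bm B)_z},
\]
and the analogous formula holds for $j^+$.

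The first step is to establish $\tilde{\bm \pi}^{(j)}\le_r \tilde{\bm \pi}^{(j^+)}$. This is exactly the fact already used in the proof of Lemma \ref{lemma_kernel3}: by Assumption \ref{assumption_P2}, $\bm P^{(u)}\le_c \bm P^{(u+1)}$ for each $u$. Copositivity of $E^{i,u}$ means $\bm \pi E^{i,u}\bm \pi'\ge 0$, and this quadratic form equals
\[
(\bm \pi \bm P^{(u)})_i(\bm \pi \bm P^{(u+1)})_{i+1}-(\bm \pi \bm P^{(u)})_{i+1}(\bm \pi \bm P^{(u+1)})_i ,
\]
since symmetrizing does not change the quadratic form. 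Hence $\bm \pi \bm P^{(u)}\le_r \bm \pi \bm P^{(u+1)}$ for consecutive indices. The adjacent-ratio inequalities give the full MLR condition whenever the entries are positive. For $j<j^+$ non-adjacent, I would chain $j\to j+1\to\cdots\to j^+$ using transitivity of $\le_r$. I expect this to be the main technical point: transitivity of MLR, and passing from adjacent inequalities to all pairs $i<l$, can fail when there are zero entries. I would first handle the case of strictly positive predicted beliefs, and then treat zeros either by a limiting argument or by restricting to supports. The paper already assumes this implication in Lemma \ref{lemma_kernel3}, so I would invoke it in the same way.

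The second step is the Bayes correction. For $i<l\in S_2$ I would compute the $2\times 2$ determinant
\[
\det\begin{bmatrix} T(\bm \pi, j; z)_i & T(\bm \pi, j; z)_l\\ T(\bm \pi, j^+; z)_i & T(\bm \pi, j^+; z)_l\end{bmatrix}
=\frac{B(i,z)B(l,z)}{(\tilde{\bm \pi}^{(j)}\bm B)_z(\tilde{\bm \pi}^{(j^+)}\bm B)_z}\det\begin{bmatrix}\tilde\pi^{(j)}(i)&\tilde\pi^{(j)}(l)\\ \tilde\pi^{(j^+)}(i)&\tilde\pi^{(j^+)}(l)\end{bmatrix}.
\]
The prefactor is nonnegative, and the remaining determinant is nonnegative by the first step. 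This gives $T(\bm \pi, j; z)\le_r T(\bm \pi, j^+; z)$ by Definition \ref{def_MLR}, mirroring the display in the proof of Lemma \ref{lemma_beliefpi}.

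The argument requires $(\tilde{\bm \pi}^{(u)}\bm B)_z>0$ for $u\in\{j,j^+\}$, so that both posteriors are defined. Otherwise the observation $z$ has probability zero under that belief, and the statement is vacuous for it.
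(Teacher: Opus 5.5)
Your two-step argument is the paper's proof. The paper's entire proof is the displayed factorization of the $2\times 2$ determinant into $B(i,z)B(l,z)/\bigl((\bm\pi\bm P^{(j)}\bm B)_z(\bm\pi\bm P^{(j^+)}\bm B)_z\bigr)$ times the determinant of the predicted beliefs. It uses $\bm\pi\bm P^{(j)}\le_r\bm\pi\bm P^{(j^+)}$ without comment; that ordering was asserted from Assumption~\ref{assumption_P2} in the proof of Lemma~\ref{lemma_kernel3}. Your first step makes this implicit claim explicit, and your derivation of it is correct whenever the predicted beliefs are strictly positive.

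The step that would fail is your proposed treatment of zeros. Transitivity of $\le_r$ is not the problem: for probability vectors it holds even with zero entries. The real problem is the passage from the adjacent-column inequalities that copositivity supplies to all pairs $i<l$. Neither a limiting argument nor restriction to supports can rescue it, because the lemma itself is then false.

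For instance, let every row of $\bm P^{(j)}$ be $(0.5,0,0.5)$ and every row of $\bm P^{(j+1)}$ be $(0.9,0,0.1)$. Both matrices are $TP_2$, and every $E^{\cdot,j}$ is the zero matrix, so Assumptions~\ref{assumption_P1} and~\ref{assumption_P2} hold. Yet for any $TP_2$ $\bm B$ with $B(0,z)B(2,z)>0$, your determinant at $(i,l)=(0,2)$ is a positive multiple of $0.5\cdot 0.1-0.5\cdot 0.9<0$.

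So instead of promising a fix, add an explicit hypothesis. One option is that $\bm\pi\bm P^{(u)}$ is strictly positive for every $u$. Another is that the copositivity in Assumption~\ref{assumption_P2} is imposed for all column pairs rather than only adjacent ones. The paper's one-line proof silently relies on the same unstated condition.
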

 
\begin{proof}
For $j<j^+\in S_1$ and $i<l\in S_2,$   \begin{eqnarray}\label{eq_prooflemma}
 \begin{bmatrix}
T(\bm \pi, j; z)_i  &T(\bm \pi, j; z)_l\\
T(\bm \pi, j^+; z)_i  &T(\bm \pi, j^+;  z)_l\\
\end{bmatrix}=
\frac{B(i, z)B(l, z)}
{({\bm \pi}P^{(j)}\bm B)_z({\bm \pi} P^{(j^+)}\bm B)_z}
  \begin{bmatrix}
{(\bm \pi \bm P^{(j)})}_i  &  {(\bm \pi \bm P^{(j)})}_l \\
{(\bm \pi \bm P^{(j^+)})}_i  &  {(\bm \pi \bm P^{(j^+)})}_l \\
\end{bmatrix}
\ge 0.
\end{eqnarray}
\end{proof}

%
%
  \subsection{Value function properties}

 In the following, we establish the structural properties of the value function and the maintenance policy based on the above lemmas.
 Theorem \ref{thm_2} shows the monotonicity of the value function with respect to $\bm \pi$, the belief distribution of $U_2$ and the state of $U_1$ respectively.
\begin{theorem}\label{thm_2}
 The value function $V(\bm \pi, j)$ has the following properties:
  \begin{itemize}
   \item [(i)] For $\bm \pi^1\le_r \bm \pi^2 \in \mathcal A$, $V(\bm \pi^1, j)\leq V(\bm \pi^2, j)$, $\forall j\in  S_1$.
   \item [(ii)] For $j\leq j'\in   S_1,$
   $V(\bm \pi, j)\le V(\bm \pi, j'),$  $\forall \bm \pi \in \mathcal A.$
  \end{itemize}
\end{theorem}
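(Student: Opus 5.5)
We will argue by induction on the value iteration $V_{r+1}=\min_a \Gamma^a V_r$, starting from $V_0\equiv 0$. The inductive claim is that $V_r$ is non-decreasing in $\bm\pi$ with respect to $\le_r$ for fixed $j$, and non-decreasing in $j$ for fixed $\bm\pi$. The statement then follows by letting $r\to\infty$, because value iteration converges pointwise and limits preserve weak inequalities. At each step it suffices to check that every operator $\Gamma^a V_r$ is monotone in the required sense, since a minimum of monotone functions is monotone. The only extra care concerns the case $j'=L_1$ in (ii), where the minimum runs over fewer actions; this is handled below.

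\textbf{Part (i).} The immediate cost $c_o(\bm\pi,j)=\sum_i \pi(i)c_{o,2}(i)+c_{o,1}(j)$ is non-decreasing in $\bm\pi$ under $\le_s$, since $c_{o,2}$ is non-decreasing. Because MLR dominance implies first-order dominance, it is also non-decreasing under $\le_r$. For $\Gamma^2$ and $\Gamma^{12}$, the continuation terms do not depend on $\bm\pi$, so these operators are monotone at once.

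For $\Gamma^0$, we write the continuation term as $\sum_k Q(j,k)\sum_z \sigma(\bm\pi,j)_z\, V_r(T(\bm\pi,j;z),k)$, where $\sigma(\bm\pi,j)=\bm\pi\bm P^{(j)}\bm B$. Take $\bm\pi^1\le_r\bm\pi^2$. The argument has three steps:
\begin{itemize}
\item[(a)] By Lemma~\ref{lemma_beliefpi} and the induction hypothesis, $V_r(T(\bm\pi^1,j;z),k)\le V_r(T(\bm\pi^2,j;z),k)$ for each $z$.
\item[(b)] By Lemma~\ref{lemma_beliefz} and the induction hypothesis, $z\mapsto V_r(T(\bm\pi^2,j;z),k)$ is non-decreasing.
\item[(c)] By Lemma~\ref{lemma_kernal1}, $\sigma(\bm\pi^1,j)\le_r\sigma(\bm\pi^2,j)$, hence $\le_s$. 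The expectation of a non-decreasing function therefore increases when $\sigma(\bm\pi^1,j)$ is replaced by $\sigma(\bm\pi^2,j)$.
\end{itemize}
Chaining (a) with (b)--(c) gives the inequality for $\Gamma^0$. The argument for $\Gamma^1$ is identical with $k=0$ fixed.

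\textbf{Part (ii).} Fix $\bm\pi$ and take $j\le j'$. The cost $c_{o,1}$ is non-decreasing, so the immediate cost is monotone in $j$. For $\Gamma^{12}$, the continuation term is constant.

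For $\Gamma^2$, the continuation term is $\sum_k Q(j,k)V_r(\bm e^0,k)$. Assumption~\ref{assumption_Q} ($TP_2$) gives $Q(j,\cdot)\le_r Q(j',\cdot)$, and by the induction hypothesis $V_r(\bm e^0,\cdot)$ is non-decreasing. So this term is monotone in $j$.

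For $\Gamma^1$, we need $\sum_z\sigma(\bm\pi,j)_zV_r(T(\bm\pi,j;z),0)$ to be non-decreasing in $j$. First, by Lemma~\ref{lemma_beliefj} and part (i), $V_r(T(\bm\pi,j;z),0)\le V_r(T(\bm\pi,j';z),0)$. Next, by Lemma~\ref{lemma_beliefz} the integrand is non-decreasing in $z$. Finally, by Lemma~\ref{lemma_kernel3} (equivalently $\bm\pi\bm P^{(j)}\bm B\le_r\bm\pi\bm P^{(j')}\bm B$, from Assumption~\ref{assumption_P2} together with $\bm B$ being $TP_2$), $\sigma(\bm\pi,j)\le_s\sigma(\bm\pi,j')$.

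For $\Gamma^0$, the continuation term is $\sum_{k,z}Q(j,k)\sigma(\bm\pi,j)_z V_r(T(\bm\pi,j;z),k)$. We handle the change from $j$ to $j'$ in three moves:
\begin{itemize}
\item[(a)] Replace $T(\bm\pi,j;z)$ by $T(\bm\pi,j';z)$, using Lemma~\ref{lemma_beliefj} and part (i).
\item[(b)] Replace $Q(j,\cdot)$ by $Q(j',\cdot)$. This uses that the integrand is non-decreasing in $k$ (induction hypothesis in $j$) together with Assumption~\ref{assumption_Q}.
\item[(c)] Replace $\sigma(\bm\pi,j)$ by $\sigma(\bm\pi,j')$. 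This uses that the integrand is non-decreasing in $z$ (Lemma~\ref{lemma_beliefz} and part (i)) together with Lemma~\ref{lemma_kernel3}.
\end{itemize}
Since the joint kernel factorizes as $Q(j,k)\sigma(\bm\pi,j)_z$, the two replacements in (b) and (c) can be done one coordinate at a time. Lemma~\ref{lemma_kernal2} confirms the $k$-marginal ordering.

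\textbf{Boundary case $j'=L_1$.} Here $V_{r+1}(\bm\pi,L_1)=\min\{\Gamma^1,\Gamma^{12}\}$, while $V_{r+1}(\bm\pi,j)\le\min\{\Gamma^1V_r(\bm\pi,j),\Gamma^{12}V_r(\bm\pi,j)\}$ because the minimum at $j$ runs over more actions. By the monotonicity of $\Gamma^1$ and $\Gamma^{12}$ in $j$ shown above, the right-hand side is at most $V_{r+1}(\bm\pi,L_1)$, so the inequality holds here as well.

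\textbf{Main obstacle.} We expect the $\Gamma^0$ step in (ii) to be the hardest. There the dependence on $j$ enters through three channels at once — the $U_1$ transition, the observation distribution, and the belief update — and each replacement requires monotonicity of the integrand in the other arguments. This is why the induction must carry parts (i) and (ii) jointly.
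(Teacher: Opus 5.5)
Your proposal is correct and follows the paper's route: induction on value iteration from $V_0\equiv 0$ carrying (i) and (ii) jointly, with each operator checked for monotonicity using Lemmas~\ref{lemma_kernal1}, \ref{lemma_beliefz}, \ref{lemma_beliefpi}, \ref{lemma_beliefj}, Assumption~\ref{assumption_Q} and the ordering $\bm\pi\bm P^{(j)}\bm B\le_r\bm\pi\bm P^{(j')}\bm B$. It is actually somewhat more complete than the written proof in two places the paper passes over: your step (b) for $\Gamma^0$ in part (ii), which replaces $Q(j,\cdot)$ by $Q(j',\cdot)$ using monotonicity in $k$ (the paper's displayed inequality keeps $Q(j,k)$ on both sides), and your treatment of the reduced action set at $j'=L_1$.
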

\begin{proof}
The value iteration is implemented to show the monotonicity of the value function.
We utilize a subscript to label each iteration. Suppose that $V_0(\bm \pi, j)=0$
for all $(\bm \pi, j) \in \mathcal A\times S_1.$ 

We first prove (i), the monotonicity of $V$ with respect to $\bm \pi.$
Assume that at iteration $r$, 
$V_r$ is non-decreasing in $j$ for any 
$\boldsymbol{\pi} \in \mathcal{A}$, and MLR-increasing in $\boldsymbol{\pi}$ 
for any $j \in S_1$.
It is easily seen that $\Gamma^{2}V_r(\bm \pi, j)$ and 
$\Gamma^{12}V_r(\bm \pi, j)$ hold the monotonicity with $\bm \pi$.

We then prove that $\Gamma^1 V_r(\bm \pi, j)$ is MLR-increasing in $\bm \pi.$  
From Lemma \ref{lemma_beliefz},
given $(\bm \pi, j),$
$T(\bm \pi, j; z)$ is MLR-increasing with $z$. Hence,
$V_r(\cdot)$ is increasing in $z$. As
$\bm \pi^1 \bm P^{(j)}\bm B \le \bm \pi^2 \bm P^{(j)}\bm B$ for $\bm \pi^1\le_r \bm \pi^2$,
from Lemma 4.1 in \cite{ohnishi1986optimal},
we have that 
$\sum_{z\in O}(\bm \pi^1 \bm P^{(j)}\bm B)_z V_r(T(\bm \pi^1, j; z), 0) \le  \sum_{z\in O}(\bm \pi^2 \bm P^{(j)}\bm B)_z V_r(T(\bm \pi^1, j; z), 0).$
From Lemma \ref{lemma_beliefpi}, we know that $T(\bm \pi^1, j; z)\le_r T(\bm \pi^2, j; z)$.
Hence $V_r(T(\bm \pi^1, j; z), 0)\le V_r(T(\bm \pi^2, j; z), 0)$, implying that 
$$\sum_{z\in O}(\bm \pi^1 \bm P^{(j)}\bm B)_z V_r(T(\bm \pi^1, j; z), 0) \le  \sum_{z\in O}(\bm \pi^2 \bm P^{(j)}\bm B)_z V_r(T(\bm \pi^2, j; z), 0).$$
Therefore, $\Gamma^1V_r(\bm \pi, j)$ is MLR-increasing with $\bm \pi$.

For $\Gamma^{0}V_r(\bm \pi, j)$, similar to the proof of $\Gamma^1V_r(\bm \pi, j)$, we know that given $k$,
$$\sum_{z\in O}(\bm \pi^1 \bm P^{(j)}\bm B)_z V_r(T(\bm \pi^1, j; z), k) \le  \sum_{z\in O}(\bm \pi^2\bm  P^{(j)}\bm B)_z V_r(T(\bm \pi^2, j; z), k).$$
Hence, $$ \sum_{k\in S_1}Q(j,k)\sum_{z\in O}(\bm \pi^1 \bm P^{(j)}\bm B)_z V_r(T(\bm \pi^1, j; z), k)
\le \sum_{k\in S_1}Q(j,k) \sum_{z\in O}(\bm \pi^2 \bm P^{(j)}\bm B)_z V_r(T(\bm \pi^2, j; z), k),$$
implying that $\Gamma^{0}V_r(\bm \pi ,j)$ is MLR-increasing with $\bm \pi.$
Therefore, $V_{r+1}$ is MLR-increasing with $\bm \pi.$

Next, we prove the monotonicity of  $V$ with $j$ in (ii).
$\Gamma^{12}V_r$ is constant with respect to $j$.
For $\Gamma^2V_r$, it is non-decreasing with $j$ as  $\bm Q$ is $TP_2.$
For $\Gamma^1V_r$, we know that $\bm \pi \bm P^{(j)}\bm B \le_r \bm \pi \bm P^{(j^+)}\bm B$, hence 
$$\sum_{z\in O}(\bm \pi \bm P^{(j)}\bm B)_z V_r(T(\bm \pi, j; z), 0) \le  \sum_{z\in O}(\bm \pi \bm P^{(j^+)}\bm B)_z V_r(T(\bm \pi, j; z), 0),$$
as $T(\bm \pi, j; z)$ is MLR-increasing with $z$ and $V_r$ is increasing with $z$.
From Lemma \ref{lemma_beliefj}, it is seen that $T(\bm \pi, j; z)\le_r T(\bm \pi, j^+; z).$
Hence, $V_r(T(\bm \pi, j; z), 0)\le V_r(T(\bm \pi, j^+; z) ,0)$.
We therefore have 
$$\sum_{z \in O}(\bm \pi \bm P^{(j)}\bm B)_z V_r(T(\bm \pi, j; z), 0) \le  \sum_{z\in O}(\bm \pi \bm P^{(j^+)}\bm B)_z V_r(T(\bm \pi, j^{+}; z), 0),$$
indicating that $\Gamma^1 V_r$ is non-decreasing with $j.$

For $\Gamma^{0}V_r,$ similarly, with a fixed $k$, 
$$\sum_{z\in O}(\bm \pi \bm P^{(j)}\bm B)_z V_r(T(\bm \pi, j; z), k) \le  \sum_{z\in O}(\bm \pi \bm P^{(j^+)}\bm B)_z V_r(T(\bm \pi, j^{+}; z), k),$$
Hence, 
\begin{align*}
\sum_{k\in S_1} Q(j,k)
\sum_{z\in O}
\bigl(\boldsymbol{\pi}\,\boldsymbol{P}^{(j)}\boldsymbol{B}\bigr)_{z }
\,V_r\!\left(T(\boldsymbol{\pi}, j; z),\, k\right)
\;\le\;
\sum_{k\in S_1} Q(j,k)
\sum_{z\in O}
\bigl(\boldsymbol{\pi}\,\boldsymbol{P}^{(j^+)}\boldsymbol{B}\bigr)_{z}
\,V_r\!\left(T(\boldsymbol{\pi}, j^{+}; z),\, k\right).
\end{align*}

 $\Gamma^0V$ is thus  non-decreasing of $j.$
Hence, $V_{r+1}$ is non-decreasing with $j$ as the minimization operator preserves the monotonicity.
\end{proof}

\subsection{Maintenance policy properties}
Theorems \ref{thm_3} and \ref{thm_4} provide some structural properties of the maintenance policy. 
\begin{theorem}\label{thm_3}
 The optimal maintenance policy has the following properties.
     
    \begin{itemize}
      \item [(i)] There exists a threshold $l^*$ for the observable state of $U_1$, 
which is independent of $\bm \pi$, 
such that action $a=12$ is preferred over action $a=2$ when $j > l^*$, 
while action $a=2$ is preferred over action $a=12$ otherwise.

    
  \item [(ii)]There exist boundaries in MLR order $\bm \pi^{*}_{0,12}(j)$, $\bm \pi^{*}_{1,12}(j)$, $\bm \pi^{*}_{0,2}(j)$
  and $\bm \pi^{*}_{1, 2}(j)$  
  between action pairs $(0, 12)$, $(1, 12)$, $(0, 2)$
  and $(1, 2)$.  The first three boundaries are non-increasing functions of the observable state $j, j\in  S_1$.
  
    
    \end{itemize}

\end{theorem}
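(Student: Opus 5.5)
For part (i), I would compare the two actions through their difference:
\begin{align*}
D(\bm\pi,j) &:= \Gamma^{12}V(\bm \pi, j)-\Gamma^{2}V(\bm \pi, j)\\
&= c_{r,1}+\gamma V(\bm e^0,0)-\gamma\sum_{k\in S_1}Q(j,k)V(\bm e^0,k).
\end{align*}
The operating cost $c_o(\bm\pi,j)$ and the set-up cost $c_s$ cancel. Both remaining terms involve the belief only through the fixed vector $\bm e^0$, so $D$ does not depend on $\bm\pi$.

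By Theorem \ref{thm_2}(ii), $k\mapsto V(\bm e^0,k)$ is non-decreasing. By Assumption \ref{assumption_Q}, $\bm Q$ is $TP_2$, so its rows increase in $j$ in the MLR order and hence in the first-order stochastic order. Therefore $\sum_k Q(j,k)V(\bm e^0,k)$ is non-decreasing in $j$, and $D(\bm\pi,j)$ is non-increasing in $j$.

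I would then set $l^*=\max\{j\in S_1: D(\cdot,j)\ge 0\}$, with the usual convention if this set is empty. For $j>l^*$ we have $D<0$, so $a=12$ is preferred. For $j\le l^*$ monotonicity gives $D\ge 0$, so $a=2$ is preferred. The only care needed is that $a=2$ is admissible only for $j<L_1$; this is harmless because the comparison is made wherever both actions are available.

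For part (ii), for each pair $(a,b)\in\{(0,12),(1,12),(0,2),(1,2)\}$ I would define $D_{a,b}(\bm\pi,j)=\Gamma^aV(\bm\pi,j)-\Gamma^bV(\bm\pi,j)$. The boundary $\bm\pi^*_{a,b}(j)$ is then the MLR-ordered frontier of the set $\{\bm\pi: D_{a,b}(\bm\pi,j)\ge 0\}$, that is, the set where $b$ is preferred.

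The existence of a boundary in MLR order amounts to showing that $D_{a,b}$ is MLR-increasing in $\bm\pi$. That gives an up-set in the MLR order, and its lower frontier is the boundary. For the pairs involving $12$ this is clean. $\Gamma^{12}V$ equals the linear term $c_o(\bm\pi,j)$ plus a constant, so $D_{0,12}$ is $\gamma$ times the continuation term $W(\bm\pi,j)=\sum_{k,z}P(k,z\mid\bm\pi,j)V(T(\bm\pi,j;z),k)$ minus a constant, since $c_o$ cancels. The proof of Theorem \ref{thm_2}(i), using Lemmas \ref{lemma_kernal1}, \ref{lemma_beliefz} and \ref{lemma_beliefpi}, shows that this continuation term is MLR-increasing, so $D_{0,12}$ is MLR-increasing. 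The same argument with $k=0$ fixed handles $D_{1,12}$. For $(0,2)$, $\Gamma^2V$ minus $c_o$ is also independent of $\bm\pi$, so the same reasoning applies. For $(1,2)$ the identical argument gives existence of the boundary, and I claim no monotonicity in $j$ for it, which matches the statement.

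For monotonicity in $j$ of the first three boundaries, I would show that $D_{0,12}$, $D_{1,12}$ and $D_{0,2}$ are non-decreasing in $j$ for fixed $\bm\pi$. Then the set where the replacement action $b$ is preferred grows with $j$, so its MLR frontier moves down and $\bm\pi^*_{a,b}(j)$ is non-increasing.

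For $D_{0,12}$ and $D_{1,12}$ this follows because $\Gamma^{12}V-c_o$ is constant in $j$, while the continuation terms of $\Gamma^0$ and $\Gamma^1$ are non-decreasing in $j$ by the $\Gamma^0$ and $\Gamma^1$ parts of the proof of Theorem \ref{thm_2}(ii), which rest on Lemmas \ref{lemma_kernal2}, \ref{lemma_kernel3} and \ref{lemma_beliefj}.

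The main obstacle is $D_{0,2}$. Here both the continuation term of $\Gamma^0$ and the term $\sum_k Q(j,k)V(\bm e^0,k)$ of $\Gamma^2$ increase in $j$, so I would need a supermodularity-type inequality. What must be shown is that $\sum_k Q(j,k)\big[\sum_z(\bm\pi\bm P^{(j)}\bm B)_zV(T(\bm\pi,j;z),k)-V(\bm e^0,k)\big]$ is non-decreasing in $j$.

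I would try to prove inductively along value iteration that $k\mapsto \mathbb E_z V_r(T(\bm\pi,j;z),k)-V_r(\bm e^0,k)$ is non-decreasing, meaning $V_r$ has increasing differences in $(\bm\pi,k)$ with respect to the MLR order. Combined with the $TP_2$ property of $\bm Q$, and with the increase in $j$ coming from Assumption \ref{assumption_P2}, this would give the result. If that supermodularity cannot be propagated through the minimization, I would fall back on adding it as an explicit condition in this step.
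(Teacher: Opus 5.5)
Your argument for (i), for the existence of all four MLR boundaries, and for the $j$-monotonicity of $\bm\pi^*_{0,12}(j)$ and $\bm\pi^*_{1,12}(j)$ is the paper's argument. The paper writes out only the $(0,12)$ case, using that the difference is MLR-increasing in $\bm\pi$ and non-decreasing in $j$ via Theorem~\ref{thm_2}, and says the remaining pairs follow ``in a similar way''.

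\textbf{The gap.} It is the step you flag yourself. Nothing in your plan shows that $\Gamma^0V-\Gamma^2V$ is non-decreasing in $j$, so the claim that $\bm\pi^*_{0,2}(j)$ is non-increasing is not proved. You are right that this case is not similar to $(0,12)$. However, your proposed repair cannot be carried out under Assumptions~\ref{assumption_P1}--\ref{assumption_B}: increasing differences of $V$ in $(\bm\pi,k)$ fails in general because the set-up cost is shared. When $U_1$ is failed, the set-up is paid anyway, so a degraded $U_2$ costs only about $c_{r,2}$ extra to fix. When $U_1$ is healthy, the same fix costs about $c_s+c_{r,2}$. This economic dependence pushes $V$ toward \emph{sub}modularity in $(\bm\pi,k)$.

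\textbf{A counterexample.} The claimed monotonicity of $\bm\pi^*_{0,2}$ is itself false. Take the following instance:
\begin{itemize}
\item $L_1=2$, $L_2=1$, and $\bm B$ the identity;
\item $\bm P^{(0)}=\bm P^{(1)}=\bm P^{(2)}$ with rows $(0.9,0.1)$ and $(0,1)$;
\item $\bm Q$ with rows $(1,0,0)$, $(0,0,1)$, $(0,0,1)$;
\item $\bm c_{o,1}\equiv 0$, $\bm c_{o,2}=(0,100)$, $c_s=c_{r,1}=c_{r,2}=10$, and $\gamma=0.9$.
\end{itemize}
All matrices are $TP_2$, and equal kernels are trivially copositively ordered.

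Let $\bm e^1=(0,1)$ and $s=(\bm\pi\bm P^{(0)})_1$. From $j=0$ the next $U_1$ state is $0$, and from $j=1$ it is $2$. Define $\Delta_0=V(\bm e^1,0)-V(\bm e^0,0)$ and $\Delta_1=V(\bm e^1,2)-V(\bm e^0,2)$. Solving the Bellman equations gives
\[
V(\bm e^0,0)\approx 99.08,\quad V(\bm e^1,0)\approx 209.17,\quad V(\bm e^0,2)\approx 119.08,\quad V(\bm e^1,2)\approx 219.17.
\]
Hence
\[
\Gamma^0V(\bm\pi,j)-\Gamma^2V(\bm\pi,j)=\gamma s\,\Delta_j-(c_s+c_{r,2}),\qquad \Delta_0=120/1.09\approx 110.09,\quad \Delta_1=\Delta_0-c_s\approx 100.09 .
\]
So $a=2$ is preferred to $a=0$ once $\pi(1)$ exceeds about $0.113$ when $j=0$, but only once it exceeds about $0.136$ when $j=1$. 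The $(0,2)$ boundary therefore moves up in $j$. The sign gap is strict, so it survives small perturbations that make $\bm P^{(j)}$ strictly worse in $j$.

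\textbf{Consequence.} Your fallback is not optional. That part of (ii) needs an extra hypothesis, such as the increasing-differences property you name. Of the three boundary-monotonicity claims, only those for $\bm\pi^*_{0,12}$ and $\bm\pi^*_{1,12}$ follow from the stated assumptions.
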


\begin{proof}
From the Bellman equation, it is seen that  $\forall (\bm \pi, j)$, $\Gamma^2V(\bm \pi, j)-\Gamma^{12}V(\bm \pi, j)$
is a non-decreasing function of $j$ and is independent of $\bm \pi.$ Hence, there exists a boundary $l^*$ such that $\Gamma^2V(\bm \pi, j)-\Gamma^{12}V(\bm \pi, j)<0$ for $j\le l^*$ whereas  $\Gamma^2V(\bm \pi, j)-\Gamma^{12}V(\bm \pi, j)\ge 0$ otherwise. Hence, the first statement (i) is justified. 

In the next, we will show the existence of the threshold $\bm \pi^*_{0,12}(j)$ and its monotonicity with $j$ in (ii).
From Theorem \ref{thm_2} and the Bellman equation,
we know that $\Gamma^0V(\bm \pi, j)-\Gamma^{12}V(\bm \pi, j)$ is MLR-increasing with $\bm \pi$. Hence, there exists a boundary $\bm \pi_{0,12}^*(j)$
such that action $a=12$ is preferred over action $a=0$ when $\bm \pi \ge_r \bm \pi_{0,12}^*(j)$, whereas action $a=0$ is preferred otherwise.
In addition, $\Gamma^0V(\bm \pi, j)-\Gamma^{12}V(\bm \pi, j)$ is increasing in $j$. Hence,
given a belief $ \tilde {\bm\pi}\ge_r \bm \pi_{0,12}^*(j)$,
we know that $\Gamma^0V(\tilde {\bm\pi}, j+1)- \Gamma^{12}V(\tilde {\bm\pi}, j+1)
\ge \Gamma^0V(\tilde {\bm\pi}, j)- \Gamma^{12}V(\tilde {\bm\pi}, j) \ge 0,$
 indicating that $\bm \pi_{0,12}^*(j)\ge_r \bm \pi_{0,12}^*(j+1)$.
 The establishment of other boundaries can be validated in a similar way and we omit the proof here.
\end{proof}

%

\begin{theorem}\label{thm_4}
 The optimal maintenance policy induces the following properties.
  \begin{itemize}
    \item [(i)] If the optimal maintenance policy in state $(\bm \pi, j)$ is to replace both components ($a=12$), then the same action remains optimal for any state  $(\bm \pi^+, j^+)$ such that $\bm \pi \le_r \bm \pi^+ \in \mathcal A$ and $j\le j^+ \in S_1$.
    
    \item [(ii)]For any $j\in S_1,$ if it is optimal to only replace component $U_2 \;(a=2)$ in state $(\bm \pi, j)$,  then it will  be optimal to either only replace component $U_2 \;(a=2)$ or to replace both the two components ($a=12$)
    for system with
    state $(\bm \pi^+, j)$, $\bm \pi \le_r \bm \pi^+
    \in \mathcal A$.
        
     \item  [(iii)] For any   $j \in S_1$, the optimal regions for the action “Only replace Component $U_1$" ($a=1$) and the action “Replace Components $U_1$ and $U_2
     $" ($a=12$) are mutually exclusive with respect to the belief  $\bm \pi, \bm \pi \in \mathcal A$.
  \end{itemize}
\end{theorem}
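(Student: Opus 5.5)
For (i) and (ii) I would argue by comparing action-value differences, using only Theorem~\ref{thm_2}, Lemmas~\ref{lemma_beliefz}--\ref{lemma_beliefj} and the explicit forms of $\Gamma^a V$. Part (iii) needs a new idea, and I am not confident it follows from these results.

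\emph{Part (i).} Suppose $a=12$ is optimal at $(\bm\pi,j)$. Then $\Gamma^{12}V(\bm\pi,j)\le\Gamma^aV(\bm\pi,j)$ for every admissible $a$. I will show that each difference $D_a(\bm\pi,j)=\Gamma^aV(\bm\pi,j)-\Gamma^{12}V(\bm\pi,j)$ is non-decreasing in $\bm\pi$ (MLR order) and in $j$.
\begin{itemize}
\item For $a=2$: $D_2=\gamma\sum_k Q(j,k)V(\bm e^0,k)-c_{r,1}-\gamma V(\bm e^0,0)$. It does not depend on $\bm\pi$. It is non-decreasing in $j$ because $\bm Q$ is $TP_2$ and $V(\bm e^0,\cdot)$ is non-decreasing by Theorem~\ref{thm_2}(ii).
\item For $a=1$: $D_1=\gamma\sum_z(\bm\pi\bm P^{(j)}\bm B)_zV(T(\bm\pi,j;z),0)-c_{r,2}-\gamma V(\bm e^0,0)$. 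Its monotonicity in $\bm\pi$ and in $j$ is exactly what is shown for $\Gamma^1V$ in the proof of Theorem~\ref{thm_2}, since the subtracted terms are constant.
\item For $a=0$: the same argument applies, using the $\Gamma^0$ part of that proof.
\end{itemize}
Moving from $(\bm\pi,j)$ to $(\bm\pi^+,j^+)$ then keeps every $D_a\ge 0$. If $j<L_1\le j^+$, only actions $1$ and $12$ remain available, so the condition only becomes easier. Hence $a=12$ stays optimal.

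\emph{Part (ii).} Fix $j$ and suppose $a=2$ is optimal at $\bm\pi$. Then $\Gamma^2V(\bm\pi,j)$ does not depend on $\bm\pi$, because $c_o$ cancels in all the comparisons. Consider $\Gamma^0V-\Gamma^2V$ and $\Gamma^1V-\Gamma^2V$. Their $\bm\pi$-dependent parts are $\gamma W(\bm\pi,j)$ and $\gamma\sum_z(\bm\pi\bm P^{(j)}\bm B)_zV(T,0)$, and both are MLR non-decreasing by Theorem~\ref{thm_2}. So at $\bm\pi^+\ge_r\bm\pi$, actions $0$ and $1$ are still no better than $a=2$. Since $\Gamma^{12}$ may also beat $\Gamma^2$ (the ranking of $2$ versus $12$ is decided by $j$ through $l^*$ in Theorem~\ref{thm_3}(i)), the optimal action at $\bm\pi^+$ is $2$ or $12$.

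\emph{Part (iii).} Here I must show that, for fixed $j$, the sets $\{\bm\pi:\ a=1 \text{ optimal}\}$ and $\{\bm\pi:\ a=12 \text{ optimal}\}$ cannot both be nonempty. My plan is to exploit the fact that optimality of $a=12$ contains a $\bm\pi$-free condition, namely $\Gamma^{12}V\le\Gamma^2V$, i.e. $D_2(j)\ge 0$.

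First, assume $a=12$ is optimal at some $\bm\pi_0$, so $D_2(j)\ge0$. I then want to prove $\Gamma^1V(\bm\pi',j)\ge\Gamma^{12}V(\bm\pi',j)$, ruling out $a=1$ as the optimal action at every $\bm\pi'$. This is equivalent to
\[
\gamma\sum_z(\bm\pi'\bm P^{(j)}\bm B)_zV(T(\bm\pi',j;z),0)\ \ge\ c_{r,2}+\gamma V(\bm e^0,0)
\]
for all $\bm\pi'$. Equality in $\Gamma^1V=\Gamma^{12}V$ is a tie and does not make $a=1$ uniquely optimal.

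By the MLR monotonicity from Theorem~\ref{thm_2}, the left side is smallest at $\bm\pi'=\bm e^0$. So it suffices to prove the inequality at $\bm e^0$ and $j=0$. I would try to obtain this by a one-step comparison in the Bellman equation for $V(\cdot,0)$. Using $V\le\Gamma^2V$ at the posterior states, combined with $D_2\ge0$ and Theorem~\ref{thm_2}(ii), I aim to get the complementary bound: whenever $a=1$ is optimal at $(\bm\pi',j)$, the inequality $D_2(j)<0$ is forced, which contradicts $a=12$ being optimal anywhere at that $j$.

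This sign-linking step, which forces $D_1$ and $D_2$ to be compatible, is the main obstacle. If it cannot be closed in general, I would state exactly what extra cost condition it needs, for example $c_{r,2}$ small relative to $c_{r,1}$, and verify the claim under that condition.
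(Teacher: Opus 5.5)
\textbf{Parts (i) and (ii).} These follow the paper's proof. The paper also obtains (i) from the monotonicity of $\Gamma^aV-\Gamma^{12}V$ in $\bm\pi$ and in $j$ for $a\in\{0,1,2\}$, and (ii) from the MLR monotonicity of $\Gamma^aV-\Gamma^2V$ for $a\in\{0,1\}$. Your handling of the reduced action set when $j^+=L_1$ is a useful addition. Two small corrections:
\begin{itemize}
\item The $\Gamma^0$ step in the proof of Theorem~\ref{thm_2} keeps $Q(j,\cdot)$ on both sides. To get $D_0$ non-decreasing in $j$ you still need $\sum_kQ(j,k)g(k)\le\sum_kQ(j^+,k)g(k)$, where $g(k)=\sum_z(\bm\pi\bm P^{(j^+)}\bm B)_zV(T(\bm\pi,j^+;z),k)$. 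This holds because $g$ is non-decreasing in $k$ (Theorem~\ref{thm_2}(ii)) and $\bm Q$ is $TP_2$.
\item In (ii), $\Gamma^{12}V$ cannot strictly beat $\Gamma^2V$ at $(\bm\pi^+,j)$. Optimality of $a=2$ at $(\bm\pi,j)$ gives $D_2(j)\le0$, and $D_2$ does not depend on $\bm\pi$, so $a=2$ itself stays optimal.
\end{itemize}

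\textbf{Part (iii).} This is a genuine gap, and it cannot be closed. The claim you set out to prove is that, for fixed $j$, the $a=1$ and $a=12$ regions cannot both be nonempty. That claim is false under Assumptions 1--4.

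Take $L_2=1$, $\bm B=I$, and $\bm P^{(j)}=\begin{bmatrix}1-p&p\\0&1\end{bmatrix}$ for all $j$; this is $TP_2$ and trivially copositively ordered. Let $\bm Q$ be any $TP_2$ matrix, $\bm c_{o,2}=(0,K)$, $c_{r,2}=R$, and set all other costs to zero. Replacing $U_1$ is then free. By value iteration every $V_r$ is $j$-free, so $\Gamma^0V=\Gamma^1V$ and $\Gamma^2V=\Gamma^{12}V$ for $j<L_1$. The only reachable beliefs are $\bm e^0$ and $\bm e^1=(0,1)$, and on them
\[
V(\bm e^0,\cdot)=\frac{\gamma p(K+R)}{(1-\gamma)(1+\gamma p)},\qquad V(\bm e^1,\cdot)=K+R+\gamma V(\bm e^0,\cdot).
\]
Take $\gamma=0.9$, $p=0.05$, $K=10$, $R=1$. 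At $\bm e^0$ you get $\Gamma^1V\approx4.74<5.26\approx\Gamma^{12}V$. At $\bm e^1$ you get $\Gamma^{12}V\approx15.26<23.74\approx\Gamma^1V$.

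So at $j=L_1$, where only $a\in\{1,12\}$ is admissible, action $1$ is strictly optimal at $\bm e^0$ and action $12$ at $\bm e^1$. At every $j<L_1$ the same holds, up to ties with $a=0$ and $a=2$, even though $D_2(j)=0\ge0$. This directly contradicts your intended implication ``$a=1$ optimal at $(\bm\pi',j)$ forces $D_2(j)<0$''. The paper's own discussion of the independent-components case describes the same pattern: replace only $U_1$ at small $\pi(1)$, and both components once $\pi(1)$ is large.

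Structurally, $c_{r,2}$ enters $D_1$ but not $D_2$, and $c_{r,1}$ enters $D_2$ but not $D_1$, so no sign link exists. Your reduction to $(\bm e^0,0)$ also discards the only hypothesis you have: $D_2$ is non-decreasing in $j$, so $D_2(j)\ge0$ says nothing at $j=0$. Imposing an extra cost condition would prove a different theorem, not this one.

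\textbf{What the paper does for (iii).} Its proof is the single remark that $\Gamma^2V-\Gamma^{12}V$ is independent of $\bm\pi$, which is the same $\bm\pi$-free fact you isolated. Taken literally, that remark gives mutual exclusivity of the $a=2$ and $a=12$ regions at each $j<L_1$, which is the content of Theorem~\ref{thm_3}(i). It says nothing about $a=1$.

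For the pair $(1,12)$, the only version that holds in general is the weak one, and it follows immediately from your part (i). For fixed $j$, the $a=12$ region is an MLR upper set. Hence $a=1$ is never strictly optimal at a belief that MLR-dominates a belief where $a=12$ is optimal; the $a=1$ region lies MLR-below the $a=12$ region. Prove (iii) in that form, or with $a=2$ in place of $a=1$, instead of attempting the sign-linking step.
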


\begin{proof}
    We give a brief proof of Theorem \ref{thm_4}.
    Statement (i) follows from the fact that for any 
$(\boldsymbol{\pi}, j) \in \mathcal{A} \times S_1$, 
$\Gamma^{l}V(\boldsymbol{\pi}, j) - \Gamma^{12}V(\boldsymbol{\pi}, j)$
is non-decreasing in $\boldsymbol{\pi}$ and in $j$, respectively, for all
$l \in \{0,1,2\}$.
Since
$\Gamma^{l}V(\boldsymbol{\pi}, j) - \Gamma^{2}V(\boldsymbol{\pi}, j)$
is non-decreasing in $\boldsymbol{\pi}$ in the MLR order for $l \in \{0,1\}$, statement (ii) follows.
Statement (iii) holds because
$\Gamma^{2}V(\boldsymbol{\pi}, j) - \Gamma^{12}V(\boldsymbol{\pi}, j)$
is independent of $\boldsymbol{\pi}$.
\end{proof}

The structural results above arise from the positive degradation coupling between the two components and the asymmetric information structure. Since $U_1$ is fully observable and a worse state of $U_1$ accelerates the degradation of the partially observable  $U_2$, both the expected future cost and the uncertainty about $U_2$ increase monotonically. Consequently, maintenance actions become more aggressive as either the observable state worsens or the belief about $U_2$ deteriorates. This leads to threshold-type decisions in the observable state and monotone switching boundaries in the belief under the MLR order. Moreover, because a worse observable state amplifies the future deterioration of $U_2$, the belief thresholds separating different actions decrease with the observable state, resulting in mutually exclusive optimal maintenance regions. Another noteworthy observation is that no threshold boundary exists between actions 0 and 1, indicating that in multi-component dependent systems the optimal policy structure is more complex and less intuitive than in single-component settings.

\section{Parameter estimation}\label{sec_ParaEs}
\subsection{Preliminary notes}
In the absence of maintenance intervention,
the model describing the system evolution can be described by the triplet $(X_1,X_2,Z)$, where $X_1$ is an observable Markov chain associated with component $U_1$ and the pair $(X_2,Z)$ forms an HMM for component $U_2$ whose distribution depends on the state of $U_1$. Assuming that both components start in perfect condition, the unknown parameters of the model consist of $L_1+3$ matrices $\theta=( {\bm Q},{\bm P}^{(0)},\ldots, {\bm P}^{(L_1)},{\bm B})$, and the total number of independent parameters to be estimated is $(L_1+1)L_1+(L_1+1)(L_2+1)L_2+(L_2+1)(M-1)$.

To estimate the model parameters, we use the expectation-maximization (EM) algorithm, treating the hidden state of $U_2$ as missing data, while the observed data consist of the successive emission signals associated with the state of component $U_2$ and the successive states of component $U_1$.

Since the behaviour of component $U_1$ does not depend on the state of component $U_2$ and it is fully observable, we can estimate the transition matrix of $X_1$, $\bm Q$, directly from the available sample information from $X_1$. Then, to estimate the matrices ${\bm P}^{(\cdot)}$ and $\bm B$, we  will formulate a maximum likelihood estimation problem conditional on the observations on $X_1$.

The EM framework is particularly well suited to this setting, as it allows parameter estimation to be decomposed into an E-step, in which the posterior distributions of the hidden states are computed via the Baum-Welch algorithm (forward-backward algorithm), and an M-step, in which the parameters are updated by maximizing the expected complete-data log-likelihood. In \cite{Bickel:etal:98},  consistency of the maximum likelihood estimator is proved under the assumption that the hidden Markov chain is ergodic. In our setting, however, the hidden process $X_2$ contains an absorbing state and is therefore
non-ergodic, so these results do not apply directly. For non-ergodic hidden
Markov chains, \cite{Douc:etal:2010} established stability (forgetting of the initial
distribution) of the filtering distribution under suitable conditions. In the present model, the hidden process has a unique absorbing failure state
and an irreversible degradation structure. This structure ensures forgetting of
the initial condition of the filter (forward-backward system) in the sense of \cite{Douc:etal:2010}. However, it also implies that along any single trajectory each irreversible transition (for example, from state 0 to 1 or from 1 to 2) can occur at most once. As a result, a single realization does not allow sufficient information to
accumulate for consistent estimation of all transition probabilities.

Consistency of the MLE requires that the normalized observed log-likelihood
converges to its expected value, which in turn relies on sufficient accumulation
of information for all model parameters. In our model, this accumulation cannot
be achieved along a single trajectory due to the absorbing structure. To
overcome this limitation, we adopt a multi-trajectory framework in which
multiple independent realizations of the HMM are observed. This allows the
effective sample size associated with each transition to grow, yielding stable
and statistically meaningful parameter estimates via the EM algorithm. In
addition, aggregating information across independent trajectories increases the
total Fisher information and consequently reduces the variance of the resulting
parameter estimators.

\paragraph{Final Remarks} 
\begin{enumerate}
\item[(i)] The algorithm we use accounts for the heterogeneous structure of the transition probabilities of the hidden component, which depend on the current state of the observable component. This requires a slight modification of the standard Baum-Welch algorithm.  

\item[(ii)] The properties of the maintenance model developed in the previous sections rely on certain structural conditions on the model matrices. These conditions are not explicitly enforced during the parameter estimation process, so the theoretical guarantees for the maintenance policies based on the estimated parameters are not formally ensured. However, since our estimators possess strong consistency properties, we can still empirically validate that the maintenance policies perform, which will be presented in Section \ref{sec_6.1} in more detail.
\end{enumerate}

\subsection{The Baum-Welch algorithm with multiple sample paths}
Suppose that we observe $T$ independent trajectories of size $n$ each, that is, let us consider the dataset  ${\mathcal X}=\left\{x_{1,0:n}^{(t)},z_{1:n}^{(t)}\right\}_{t=1}^T$, 
where we use notation $x_{1,0:n}^{(t)}=(x_{1,0}^{(t)}, x_{1,1}^{(t)},\ldots, x_{1,n}^{(t)})$ and $z_{1:n}^{(t)}=(z_{1}^{(t)},\ldots, z_n^{(t)})$, for $t=1,\ldots, T$.

By independence across trajectories, the observed-data log-likelihood is given by
\begin{eqnarray}\label{eq:likelihood}
\nonumber \ell(\theta)&=&\sum_{t=1}^T \log \PP_{\theta}(Z_{1:n}^{(t)}=z_{1:n}^{(t)}\mid X_{1,0:n}^{(t)}=x_{1,0:n}^{(t)})+\sum_{t=1}^T \log \PP_{\theta}(X_{1,0:n}^{(t)}=x_{1,0:n}^{(t)})\\
&\equiv&
\ell_1(\theta)+\ell_2(\theta).
\end{eqnarray}

We can obtain maximum likelihood estimators for the entries of matrix $\bm Q$, which governs the transitions between states of component $U_1$, directly optimizing $\ell_2$ in expression \eqref{eq:likelihood}, that is,
\[
\widehat{Q}(j,j')=\frac{\sum_{t=1}^T\sum_{k=1}^n {\bf 1}_{\{X_{1,k-1}^{(t)}=j,X_{1,k}^{(t)}=j'\}}}{\sum_{t=1}^T\sum_{k=1}^n {\bf 1}_{\{X_{1,k-1}^{(t)}=j\}}}, \quad j, j' \in S_1. 
\]

For estimating the parameters of the HMM part of the model, we consider the conditional likelihood given the observed states of component $U_1$ along the $T$ samples. Let us introduce the following notations for the posterior marginal and pairwise transition probabilities, respectively
\begin{equation}\label{eq:phi} 
	\phi_{k}^{(t)}(i)= \PP_{\theta}(X_{2,k}^{(t)}=i \mid \mathcal{X}), k=1,\cdots,n;
\end{equation}
and
\begin{equation}\label{eq:xi}
	\xi_{k}^{(t)}(i,i') = \PP_{\theta}(X_{2,k-1}^{(t)}=i, X_{2,k}^{(t)}=i' \mid \mathcal{X})  
\end{equation}
for $t=1,\ldots, T$; $k=1,\ldots, n$; and, $i,i' \in S_2$.

\subsection*{M-step}
\noindent At   iteration $r$, we have an estimate $\theta_{[r]}$ and,  for trajectory $t$, we define 
$\phi_{[r];k}^{(t)}(i)$ and $\xi_{[r];k}^{(t)}(i,i')$ 
similar to Equations \eqref{eq:phi}-\eqref{eq:xi}, respectively. Then we obtain the updated following estimation of the parameters
\begin{eqnarray}\label{eq_Pj}
{P}^{(j)}_{[r+1]}(i,i')=\frac{\sum_{t=1}^T\sum_{k=1}^n \xi_{[r];k}^{(t)}(i,i') {\bf 1}_{\{X_{1,k-1}^{(t)}=j\}}}
{\sum_{t=1}^T\sum_{k=1}^n \phi_{[r];k-1}^{(t)}(i) {\bf 1}_{\{X_{1,k-1}^{(t)}=j\}}}
\end{eqnarray}
with 
$\phi_{[r];0}^{(t)}(i)=\delta_{i,0}$.
$i,i' \in S_2$ and 
$j \in S_1$.

\[
{B}_{[r+1]}(i,z)= \frac{\sum_{t=1}^T\sum_{k=1}^{n} \mathbf{1}_{\left\{Z_k^{(t)}=z\right\}} \phi_{[r];k}^{(t)}(i)}{\sum_{t=1}^T\sum_{k=1}^{n} \phi_{[r];k}^{(t)}(i)},
\]
for $i\in S_2$ and $z \in O$.

\subsection*{E-step}
\noindent At iteration $r$, the E-step requires the computation of the posterior probabilities $\phi_{[r];k}^{(t)}(i)$ and $\xi_{[r];k}^{(t)}(i,i')$ for $i,i' \in S_2$, $k=1,\dots,n$ and $t=1,\ldots, T$. To do it, we define the forward probabilities
\[
\alpha^{(t)}_{k}(i)=\PP_{\theta}\big(X_{2,k}^{(t)}=i,\,Z_{1:k}^{(t)}=z_{1:k}^{(t)}\mid X_{1,1:n}^{(t)}=x_{1,1:n}^{(t)}\big),
\qquad i\in S_2,
\]
with initialization 
\[
\alpha^{(t)}_{1}(i)= B(i,z_1^{(t)})P^{(0)}(0,i), 
\]
given that $\PP_{\theta}(X_{1,0}=0)=1$; and  $\PP_{\theta}(X_{2,0}=0)=1$. The forward recursion is then
\[
\alpha^{(t)}_{k}(i)=\sum_{i'\in S_2}\alpha^{(t)}_{k-1}(i')\,P^{(x_{1,k-1}^{(t)})}(i',i)\,B(i,z_k^{(t)}), k=2,\cdots,n.
\]

Similarly, we define the backward probabilities
\[
\beta^{(t)}_k(i)=\PP_{\theta}\big(Z_{(k+1):n}^{(t)}=z_{(k+1):n}^{(t)} \mid X_{2,k}^{(t)}=i,X_{1,n}^{(t)}=x_{1,1:n}^{(t)}\big), \quad i \in S_2,
\]
for all $k=1,\ldots, n-1$, and $t=1,\ldots, T$. With terminal condition $\beta^{(t)}_n(i)=1$ for all $i\in S_2$;  and, the recursion is then
\[
\beta^{(t)}_k(i)=\sum_{i'\in S_2} P^{(x_{1,k}^{(t)})}(i,i')\,B(i',z_{k+1}^{(t)})\,\beta^{(t)}_{k+1}(i').
\]

At the $r$ iteration, we have an estimate $\theta_{[r]}$, and the forward-backward algorithm is applied separately to each trajectory. For trajectory $t$, we have $\alpha_{[r],k}^{(t)}(i)$ and $\beta_{[r],k}^{(t)}(i)$, and the required posterior probabilities are obtained as
\[
\phi_{[r],k}^{(t)}(i)=\frac{\alpha_{[r],k}^{(t)}(i)\,\beta_{[r],k}^{(t)}(i)}
{{\cal L}^{(t)}(\theta_{[r]})},
\]
and
\[
\xi_{[r],k}^{(t)}(i,i')=
\frac{\alpha_{[r],k-1}^{(t)}(i)\,P_{[r]}^{x_{1,k-1}^{(t)}}(i,i')\,B_{[r]}(i',z_k)\,\beta_{[r],k}^{(t)}(i')}
{{\cal L}^{(t)}(\theta_{[r]})},
\]
where 
$${\cal L}^{(t)}(\theta_{[r]})={\sum_{i'\in S_2} 
\alpha_{[r],k}^{(t)}
(i')\,\beta_{[r],k}^{(t)}(i')},$$ 
 is the likelihood of the $t$ sample trajectory, for $t=1,\ldots, T$.

The E-step and M-step are iterated until convergence.

\section{Numerical Experiments}\label{sec_numerical}
Next, we first give a motivating example to show the applicability scenario of the proposed model and examine the characteristics of the proposed policy using the estimated system parameters. We show that, for the considered small-scale problem, the proposed EM algorithm yields consistent and robust parameter estimates. When the estimated parameters are used in place of the true parameters to compute the optimal maintenance policy, the resulting policies exhibit very similar structural properties, and the associated cost difference is very small.
We then introduce three alternative commonly-used maintenance policies for comparison. Through numerical experiments involving 64 problem instances, we demonstrate that the proposed policy consistently outperforms these alternatives across all cases considered.
Finally, we conduct a series of sensitivity analyses to investigate how the optimal policy responds to variations in key system parameters, thereby providing further insights into the policy’s structural characteristics and practical behavior.

\subsection{A motivating example}\label{sec_6.1}

Consider a rotating drivetrain typical of pumps, compressors, conveyors, or industrial gearboxes. Component $U_1$ is a rolling-element bearing supporting the rotor/shaft. Its degradation is generally measured by lubrication starvation, contamination, $etc.$, which is often directly  monitored.
As the bearing degrades, progressive wear and damage can lead to a loss of stiffness and alignment at the support, causing misalignment, which provides non-ideal working condition for the shaft/gear (component $U_2$).
Consequently, the degradation of $U_2$ becomes more severe and faster. This creates UPDD: when the bearing is in a “worse” health state, the transition probabilities for the shaft/gear moving from healthy to failure increase.
In practice, the gear/shaft is often partially observable, such that its health is inferred from indirect indicators such as torque ripple or oil debris.

Assume that the rolling-element bearing consists of three states, $i.e.,$ $S_1=\{0, 1, 2\}$, representing the perfect working, degraded and failed states correspondingly. The transition matrix is 
$$
\bm Q=\begin{bmatrix}
    0.8&0.2& 0\\
    0& 0.7& 0.3\\
    0& 0&1
\end{bmatrix}$$

The state of the gear is classified into working and failure states,
$i.e.,$ $S_2=\{0, 1\}.$
Given the state of $U_1$, the transition matrices of $U_2$ are as follows.
$$\bm P^{(0)}
=\begin{bmatrix}
    0.8 & 0.2\\
    0& 1
\end{bmatrix},$$
$$\bm P^{(1)}
=\begin{bmatrix}
    0.7 & 0.3\\
    0& 1
\end{bmatrix},$$
$$\bm P^{(2)}
=\begin{bmatrix}
    0.5 & 0.5\\
    0& 1
\end{bmatrix}.$$

Its observation probability matrix is 
$$\bm B=\begin{bmatrix}
    0.8& 0.2\\
    0.2 &0.8
\end{bmatrix}.$$

\begin{figure}
        \centering \includegraphics[width=0.9\linewidth]{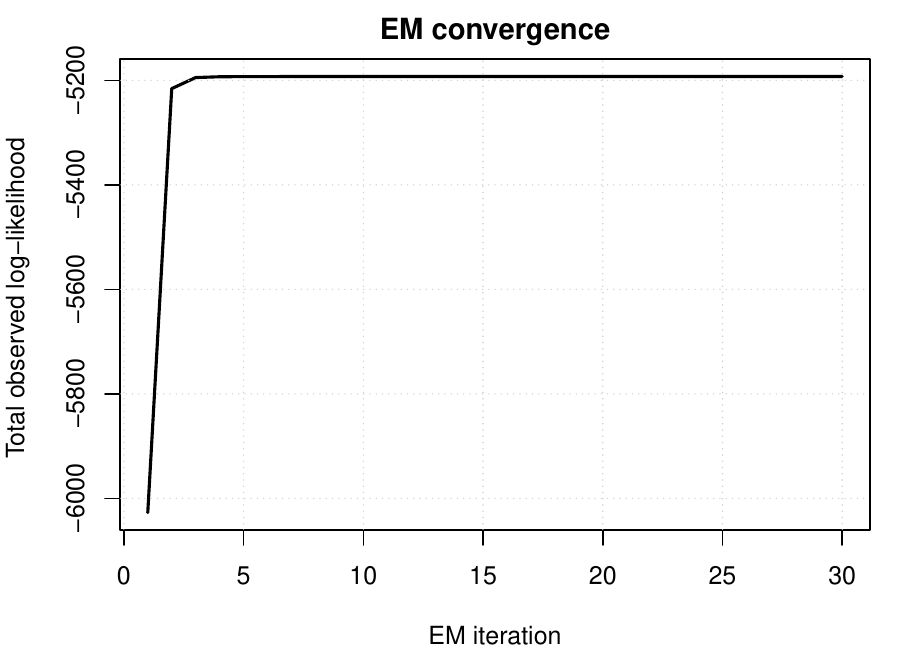}
        \caption{Observed Log-Likelihood Versus Iteration} \label{fig:EM}
\end{figure}

We generate $T= 200$ independent trajectories, with length $n=50$ for each trajectory.
In the update of  $\bm P^{(j)}$  matrices presented in Equation \eqref{eq_Pj},
if the second row of $\bm P^{(j)}$ corresponds to an absorbing state,
 the update is applied only to the first row. The maximal EM iteration number  is 30. 
 Figure \ref{fig:EM} shows the convergence of the EM algorithm, where the log-likelihood increases monotonically and stabilizes after only a small number of iterations, demonstrating the fast convergence property of the EM algorithm for this problem. 
 Table~\ref{tab:robustness_params} reports the mean and standard deviation of the parameter estimates obtained from the EM algorithm under 30 random initializations. Across all parameters, the estimated means are highly stable and the corresponding standard deviations are small, suggesting that the estimation procedure is robust to initialization and yields consistent estimates. In particular, the  estimates of $\bm Q$ remain identical across all initializations as expected, since it is estimated directly from fully observed state transitions.

\begin{table}[htbp]
\centering
\caption{Mean and standard deviation (in parentheses) of parameter estimates across random initializations}

\label{tab:robustness_params}
\begin{tabular}{l c l c l c}
\toprule
Parameter & Estimate & Parameter & Estimate & Parameter & Estimate \\
\midrule
$\hat P^{(0)}(0,0)$ & $0.797\ (0.009)$ &
$\hat P^{(1)}(0,0)$ & $0.721\ (0.008)$ &
$\hat P^{(2)}(0,0)$ & $0.509\ (0.019)$ \\

$\hat P^{(0)}(0,1)$ & $0.203\ (0.009)$ &
$\hat P^{(1)}(0,1)$ & $0.279\ (0.008)$ &
$\hat P^{(2)}(0,1)$ & $0.491\ (0.019)$ \\

$\hat Q(0,0)$ & $0.778\ (-)$ &

$\hat Q(0,1)$ & $0.222\ (-)$ &
$\hat Q(1,1)$ & $0.678\ (-)$ 
   \\

   $\hat B(0, 0)$ & $0.735\ (0.012)$ &
$\hat B (1,1)$ & $0.800\ (0.000)$ &
 \\
\bottomrule
\end{tabular}
\end{table}
We employ a point-based value iteration algorithm to compute the optimal maintenance policy. 
The cost parameters are specified as follows: the operating costs for component $U_2$ are 
$\bm c_{o,2} = (5, 40)$, while those for component $U_1$ are 
$\bm c_{o1} = (10, 20, 30)$. 
The setup cost is $c_s = 10$, and the replacement costs for components $U_1$ and $U_2$ are 
$c_{r,1} = 30$ and $c_{r,2} = 100$, respectively. 
The belief of failure probability $\pi(1)$ is discretized on a uniform grid with step size $0.002$. 
The discount factor is set to $0.95$, and the value iteration algorithm is terminated when the convergence tolerance reaches $10^{-3}$.

In this example, the belief state of component $U_2$ is 
$\bm \pi = (\pi(0), \pi(1))$, 
where $\pi(i)$ denotes the belief that component $U_2$ is in state $i$, $i \in S_2 = \{0, 1\}$.  
In the following, we will focus on $\pi(1)$, 
$i.e$., the belief of the failure probability of component $U_2$, 
 rather than the full tuple $\bm \pi$, to illustrate the characteristic of the maintenance policy with respect to the system state.

Figures~\ref{fig:MaintenanceO} and~\ref{fig:MaintenanceEs} present the optimal maintenance policies obtained using the true parameter values and the mean estimates, respectively, where 
$\pi(1)$ denotes the  failure probability belief of component $U_2$. The two policies exhibit highly consistent structural features, indicating that parameter estimation uncertainty has  a limited impact on the qualitative form of the optimal policy. In particular, the threshold-based structure is preserved across both cases. The corresponding value functions $V((1,0),0)$ under the true parameters and the estimated parameters are 
$950.59$
and 
$976.74,$  respectively,
representing a difference of only 
$2.75\%,$ indicating that the parameter estimates are reliable from a decision-making perspective.

\begin{figure}[!htp]
    \centering
    \includegraphics[width=0.9\linewidth]{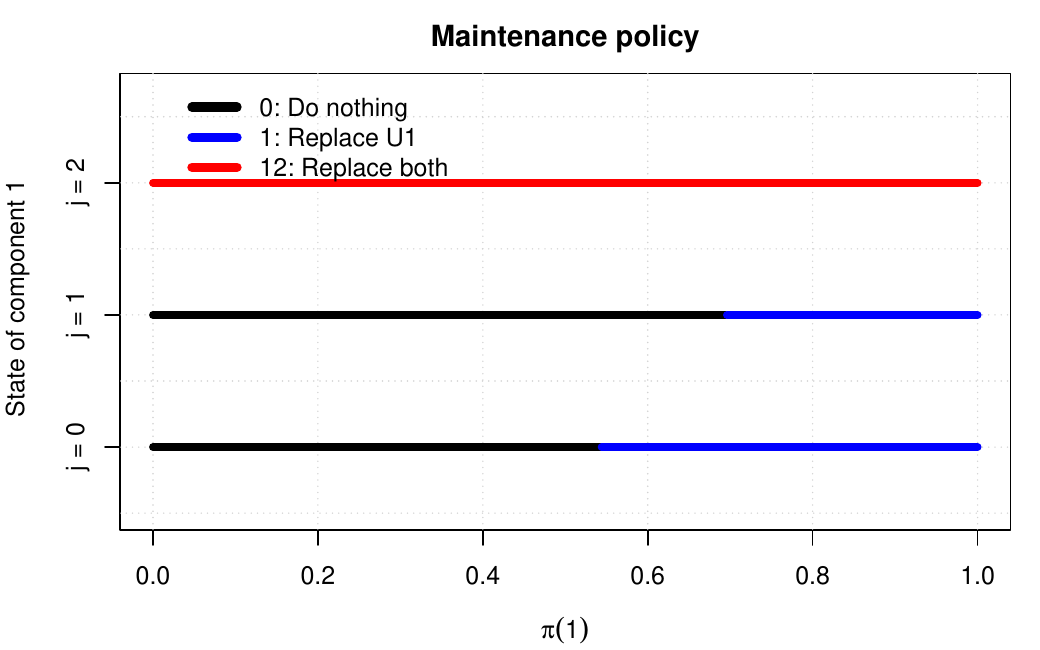}
    \caption{Maintenance policy using the true parameters}
    \label{fig:MaintenanceO}
\end{figure}

\begin{figure}[!htp]
    \centering
    \includegraphics[width=0.9\linewidth]{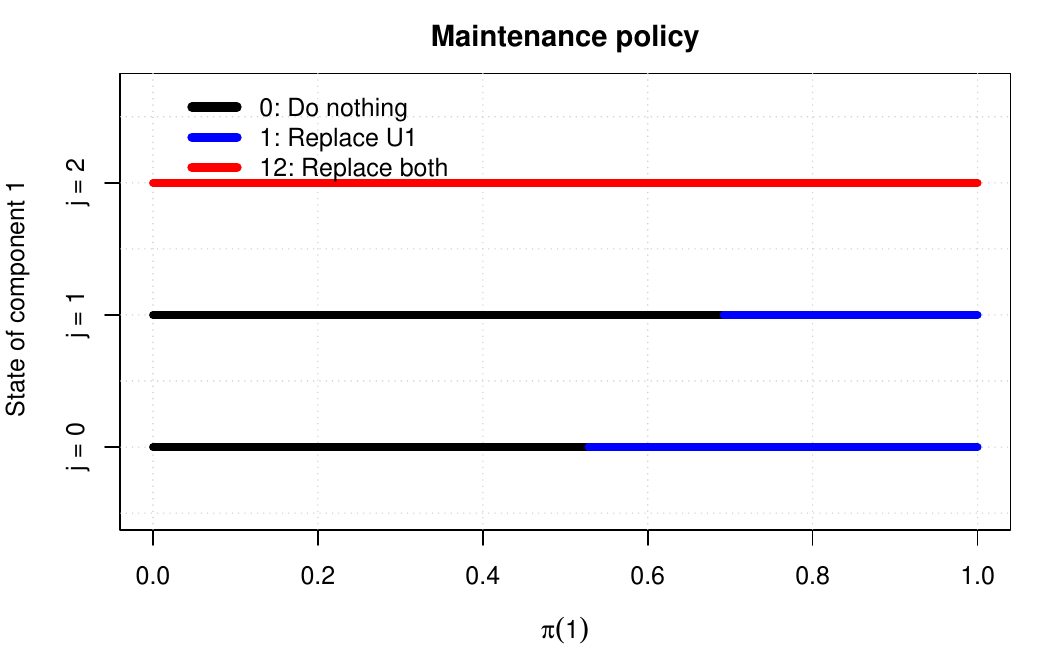}
    \caption{Maintenance policy using the mean parameter estimates}
    \label{fig:MaintenanceEs}
\end{figure}

\subsection{Policy comparison}\label{sec_policycompare}

In this subsection, we compare three preventive maintenance  and replacement policies for a two-component system consisting of components $U_1$
and $U_2$.
The policies differ in how replacement decisions are triggered, the extent to which system states are jointly considered, and the complexity of the resulting optimization problem. The objective of all policies is to minimize the long-run expected cost, represented by the value function $V(\bm 0, 0),$
 starting from the initial system state.

{\bf Policy 1} (Single-Threshold Policy) is a partially joint decision-making strategy in which preventive replacement of component $U_2$ is governed by a fixed failure probability threshold, while decisions regarding  component $U_1$ are optimized dynamically using a Markov Decision Process (MDP) framework. Under this policy, at each decision epoch, the system monitors the posterior failure probability of  component $U_2$, denoted by $\pi(L_2)$, where $L_2$ is its failure threshold. If this failure probability exceeds a predetermined threshold $\xi$, $U_2$ is immediately replaced, regardless of the condition of  component $U_1$. In contrast, the maintenance decision for component $U_1$ is made within a full MDP framework. At each decision epoch, the controller chooses whether to perform maintenance or do-nothing, based on its current state. 
The optimal preventive maintenance threshold for component $U_2$, denoted by $\xi^*$, is determined by minimizing the expected total cost starting from the initial state:
$\xi^*=\text{arg min}_{\xi}(V((1,0), 0))$.

{\bf Policy 2} (Double-threshold Policy) extends Policy 1 by introducing explicit replacement thresholds for both components, thereby increasing coordination between maintenance actions while retaining a relatively simple decision structure.
Under this policy, component $U_2$ is replaced whenever its posterior failure probability exceeds a predetermined threshold $\xi$, similar to Policy 1. In addition, component  $U_1$ is preventively replaced when its degradation level reaches or exceeds a predefined threshold
$\upsilon$. The joint replacement strategy allows for more structured decision-making and can capture some dependency effects between the two components, especially if joint replacement or shared setup costs are considered. For example, when both components approach their respective thresholds simultaneously, coordinated maintenance may reduce the setup cost. The optimal threshold $(\xi^*, \upsilon^*) $ is given by
  $(\xi^*, \upsilon^*)=\text{arg min}_{\xi,\upsilon}(V((1,0), 0)).$
  
{\bf Policy 3} (Independent Policy) represents a simplified benchmark approach in which components $U_1$
and $U_2$ are assumed to be statistically and operationally independent. Under this assumption, the degradation process of component $U_1$ provides no information about the condition or failure probability of component $U_2$.
As a result, maintenance decisions for each component are made independently using single-component optimization models.

 
  
The main performance metric  is the gap between long-run expected discounted cost between our policy, named Policy 0 in the following and the compared policies 1-3.
We take $$\%\textit{Gap}_{i}=100\cdot V((1,0), 0; \textit{Policy} ~i)-V((1,0), 0; \text{Policy} ~0)/V((1, 0), 0; \textit{Policy} ~0)),$$ 
as the policy evaluation indicator. For $i=0,1, 2$, $V((1,0), 0; \textit{Policy} ~i)$ is the expected discounted expected cost with initial state $((1,0), 0)$ under policy $i$.
 $V((1,0), 0; \textit{Policy} ~3)$ is the value function obtained by evaluating the policy developed under the assumption that $U_1$ and $U_2$ are independent. 
 The input parameters are given in Table \ref{tab:input_parameters}.

\begin{table}[htbp]
\centering
\caption{Input parameters}
\label{tab:input_parameters}

\renewcommand{\arraystretch}{1.15}   
\setlength{\tabcolsep}{9pt}          
\small                               

\begin{tabular}{>{\raggedright\arraybackslash}p{3.3cm} c p{9.2cm}}
\toprule
\textbf{Input parameter} & \textbf{Choices} & \textbf{Values} \\
\midrule

\multirow{2}{=}{Operation cost} 
 & 2 & $\bm c_{o1}=(1,1.5,2,2.5,3,3.5),\quad \bm c_{o2}=(1,10)$ \\
 &   & $\bm c_{o1}=(1,2,3,4,5,6),\hspace{1cm}\bm  c_{o2}=(1,8)$ \\

\midrule
Replacement cost of $U_1$ & 2 & $c_{r, 1}=15,\ 8$ \\

\midrule
Replacement cost of $U_2$ & 2 & $c_{r,2}=8,\ 6$ \\

\midrule
Observation matrix of $U_2$ & 2 & 
$\bm B=\begin{bmatrix}0.8 & 0.2\\0.2 & 0.8\end{bmatrix}, \quad
\bm B=\begin{bmatrix}0.7 & 0.3\\0.3 & 0.7\end{bmatrix}$ \\

\midrule
\multirow{2}{=}{Transition matrix of $U_2$}
 & 2 & 
$\bm P^{(i)}=\begin{bmatrix}0.9 & 0.1\\0 & 1\end{bmatrix},\ i=0,1,2; \quad
\bm P^{(i)}=\begin{bmatrix}0.8 & 0.2\\0 & 1\end{bmatrix},\ i=3,4,5.$ \\

 & 2 & 
$\bm P^{(i)}=\begin{bmatrix}0.7 & 0.3\\0 & 1\end{bmatrix},\ i=0,1,2; \quad
\bm P^{(i)}=\begin{bmatrix}0.6 & 0.4\\0 & 1\end{bmatrix},\ i=3,4,5.$ \\

\bottomrule
\end{tabular}
\end{table}

In choice 1,  the transition matrices of $U_1$ is as follows.
\[
\bm Q =
\begin{bmatrix}
0.8 & 0.04 & 0.04 & 0.04 & 0.04 & 0.04 \\
0.0 & 0.8  & 0.05 & 0.05 & 0.05 & 0.05 \\
0.0 & 0.0  & 0.8  & \tfrac{1}{15} & \tfrac{1}{15} & \tfrac{1}{15} \\
0.0 & 0.0  & 0.0  & 0.8  & 0.1  & 0.1 \\
0.0 & 0.0  & 0.0  & 0.0  & 0.9  & 0.1 \\
0.0 & 0.0  & 0.0  & 0.0  & 0.0  & 1.0
\end{bmatrix}
\]
In choice 2, $\bm Q$ is as follows.
\[
\bm Q =
\begin{bmatrix}
0.6 & 0.08 & 0.08 & 0.08 & 0.08 & 0.08 \\
0.0 & 0.6  & 0.1  & 0.1  & 0.1  & 0.1 \\
0.0 & 0.0  & 0.6  & \tfrac{2}{15} & \tfrac{2}{15} & \tfrac{2}{15} \\
0.0 & 0.0  & 0.0  & 0.6  & 0.2  & 0.2 \\
0.0 & 0.0  & 0.0  & 0.0  & 0.6  & 0.4 \\
0.0 & 0.0  & 0.0  & 0.0  & 0.0  & 1.0
\end{bmatrix}
\]

\begin{table}[htbp]
\centering
\caption{$GAP\%$ comparison}
\label{tab:gap_comparison}
\renewcommand{\arraystretch}{1.2} %
\setlength{\tabcolsep}{2pt}     %
\footnotesize                    %
\resizebox{1\textwidth}{!}
{    
\begin{tabular}{>{\raggedright\arraybackslash}p{3.2cm} c ccc ccc ccc}
\toprule
\textbf{Input} & \textbf{Choice} &
\multicolumn{3}{c}{\textit{Policy} 1} &
\multicolumn{3}{c}{\textit{Policy} 2} &
\multicolumn{3}{c}{\textit{Policy} 3} \\
\cmidrule(lr){3-5}\cmidrule(lr){6-8}\cmidrule(lr){9-11}
 &  & Min & Mean & Max & Min & Mean & Max & Min & Mean & Max \\
\midrule
\multirow{2}{*}{\makecell{Operating~~cost}}
 & 1 & 0.44 & 1.93 & 3.61 & 1.80 & 3.68 & 5.72 & 0.10 & 0.83 & 2.39 \\
 & 2 & 0.47 & 1.81 & 3.65 & 1.28 & 2.89 & 5.54 & 0.10 & 0.35 & 1.10 \\
\midrule
\multirow{2}{*}{\makecell{Replacement~cost $U_1$}}
 & 1 & 0.56 & 1.99 & 3.65 & 1.28 & 3.73 & 5.72 & 0.09 & 0.43 & 1.22 \\
 & 2 & 0.44 & 1.75 & 3.61 & 1.35 & 2.84 & 3.98 & 0.10 & 0.76 & 2.39 \\
\midrule
\multirow{2}{*}{\makecell{Replacement~cost $U_2$}}
& 1 & 0.47 & 2.07 & 3.65 & 1.80 & 3.69 & 5.72 & 0.09 & 0.60 & 2.30 \\
& 2 & 0.44 & 1.68 & 3.41 & 1.23 & 2.88 & 4.82 & 0.10 & 0.59 & 2.39 \\
\midrule
\multirow{2}{*}{\makecell{Observation~matrix of $U_2$}}
 & 1 & 0.44 & 1.60 & 3.55 & 1.44 & 3.01 & 5.54 & 0.09 & 0.58 & 2.39 \\
 & 2 & 0.86 & 2.14 & 3.65 & 1.28 & 3.56 & 5.72 & 0.13 & 0.61 & 2.20 \\
\midrule
\multirow{2}{*}{\makecell{Transition~matrix of $U_1$}}
 & 1 & 0.44 & 1.51 & 3.26 & 0.09 & 0.77 & 2.39 & 1.28 & 2.94 & 4.92 \\
 & 2 & 0.90 & 2.23 & 3.65 & 0.09 & 0.42 & 1.07 & 1.73 & 3.64 & 5.72 \\
\midrule
\multirow{2}{*}{\makecell{Transition~matrix of $U_2$}}
& 1 & 0.44 & 1.25 & 2.73 & 1.44 & 3.14 & 5.55 & 0.10 & 0.55 & 2.39 \\
& 2 & 1.25 & 2.50 & 3.65 & 1.28 & 3.43 & 5.72 & 0.01 & 0.63 & 1.50 \\
\midrule
\textbf{Total} &  & 0.44 & 1.87 & 3.65 & 1.28 & 3.29 & 5.72 & 0.09 & 2.39 & 5.93 \\
\bottomrule
\end{tabular}
}
\end{table}

Table~\ref{tab:gap_comparison} presents the percentage cost gap (\(GAP\%\)) between the proposed \textbf{Policy~0} and \textbf{Policy~1--3}. 
The values of “Min”, “Mean”, and “Max” correspond to the minimum, average, and maximum percentage increase in the long-run expected discounted cost of each benchmark relative to our policy, under the given range of system parameters.

Across all test scenarios, the proposed Policy~0 consistently achieves the lowest expected cost. 
The competing strategies yield cost increases in the range of approximately 
$0.4\text{--}3.6\%$ (Policy~1), $1.3\text{--}5.7\%$ (Policy~2), and $0.1\text{--}5.9\%$ (Policy~3), respectively. 
The total mean $GAP\%$ of 1.87\%, 3.29\%, and 2.39\% for the three  policies indicates that our method outperforms all alternatives across different parameter combinations. 
This demonstrates the efficiency of the POMDP-based maintenance policy, which optimally couples the hidden component $U_2$ with the observable component $U_1$ through their transition dependence.

A consistent trend can be observed between  Policy~1 and Policy~2. 
{Policy~1 (single-threshold)} always performs better than {Policy~2 (double-threshold)} in every tested configuration. 
For example, in the “Operating cost” case, the mean cost increase under Policy~1 is 1.93\%, compared with 3.68\% under Policy~2. 
Similarly, for "Replacement cost $U_2$,'' the mean values are 2.07\% versus 3.69\%, respectively. 
This dominance arises because the double-threshold strategy imposes stricter and more restrictive decision rules—each component’s preventive replacement threshold is predetermined—limiting flexibility in maintenance actions. 
Therefore, the single-threshold approach (Policy~1) achieves a better balance between responsiveness and cost efficiency.
{Policy~3 (independent policy)} assumes statistical independence between $U_1$ and $U_2$. 
In many cases, this simplification still yields competitive results: for example, the mean gaps for “Operating cost” (0.83\%) and “Observation matrix of $U_2$”  (0.58\%) are notably smaller than those of policy 1-2. 
However, its performance is unstable and highly sensitive to the transition dynamics of $U_1$. 
While the independent policy is easy to implement and computationally efficient, it ignores potential correlations or dependencies between the components. Consequently, it may lead to suboptimal system-level performance, particularly in settings where component interactions significantly affect degradation or failure behavior. When the degradation of $U_1$ strongly influences the hidden deterioration rate of $U_2$ (as in the “Transition matrix of $U_1$” case), the independence assumption becomes invalid. 
In this case, the mean and maximum gaps rise sharply to 2.94\% and 5.72\%, respectively, which is significantly higher than in other configurations, demonstrating that ignoring stochastic dependence leads to poor decisions.

\subsection{Sensitivity analysis}
Next, some sensitivity analyses are presented based on the system parameters in section \ref{sec_policycompare}.
We take the first choice in Table \ref{tab:input_parameters} as the base case, where the first choice of parameters are chosen for all parameters.
Figure~\ref{fig:base} presents the optimal maintenance policy for the base case. The resulting policy structure is consistent with Theorem \ref{thm_3}, as the boundary
$\bm \pi^*_{0,12}(j)$ is non-increasing in the degradation state of component $U_1$. In particular, replacement of both components becomes optimal at a lower posterior failure probability of component $U_2$ when $j=4$ than $j=3,$ , indicating that more severe degradation of component $U_1$ accelerates the incentive for joint replacement.
\begin{figure}
    \centering
\includegraphics[width=0.9\linewidth]{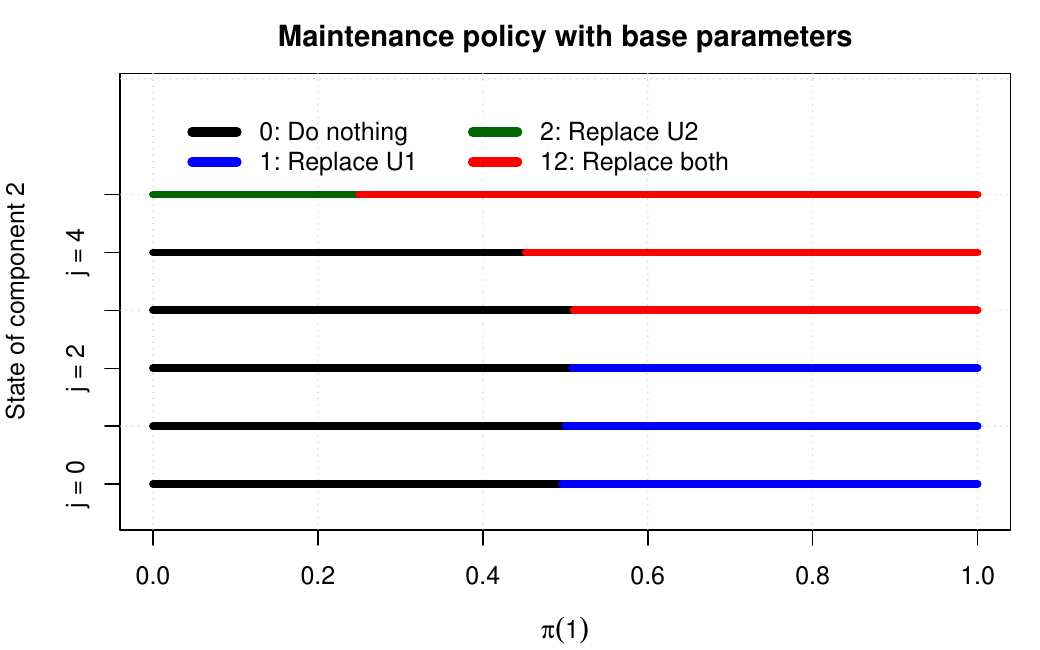}
    \caption{The optimal maintenance policy with the base parameters}
    \label{fig:base}
\end{figure}

\begin{figure}[!hbt]
    \centering
    \includegraphics[width=0.9\linewidth]{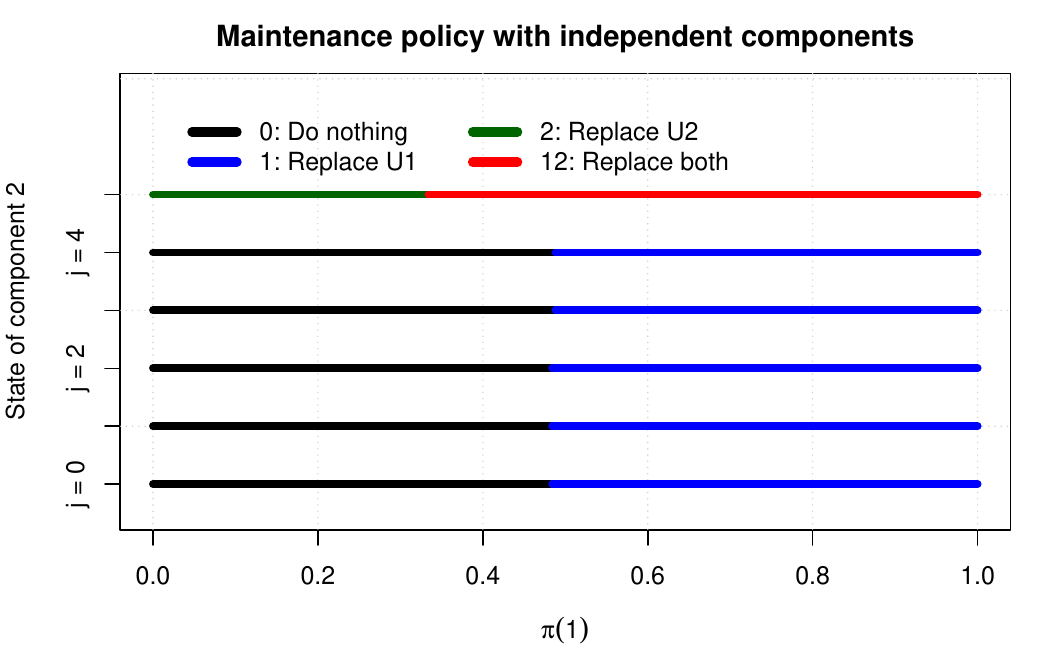}
    \caption{The optimal maintenance policy with independent components}
\label{fig:indepedent}
\end{figure}
Figure~\ref{fig:indepedent} presents the optimal maintenance policy for the case in which components $U_1$ and $U_2$ are independent. In this setting, all transition matrices $\bm P^{(j)}$
are set equal to $\bm P^{(0)}$, resulting in a system that is more reliable than the base case. Because component $U_2$ exhibits slower degradation dynamics, the system tends to postpone joint replacement and instead favors replacing only component $U_1$ until the posterior failure probability of component $U_2$ becomes relatively large, compared with the base system.

For intermediate degradation states of component~$U_1$, it is no longer optimal to replace both components even when the posterior failure probability of component~$U_2$ is high; rather, replacing only component~$U_1$ becomes preferable. The optimal value function at the initial state, $V((1,0), 0)$
 is 128.11, which is lower than the corresponding value in the base case,
137.03.

\begin{figure}[!htp]
    \centering
    \includegraphics[width=0.9\linewidth]{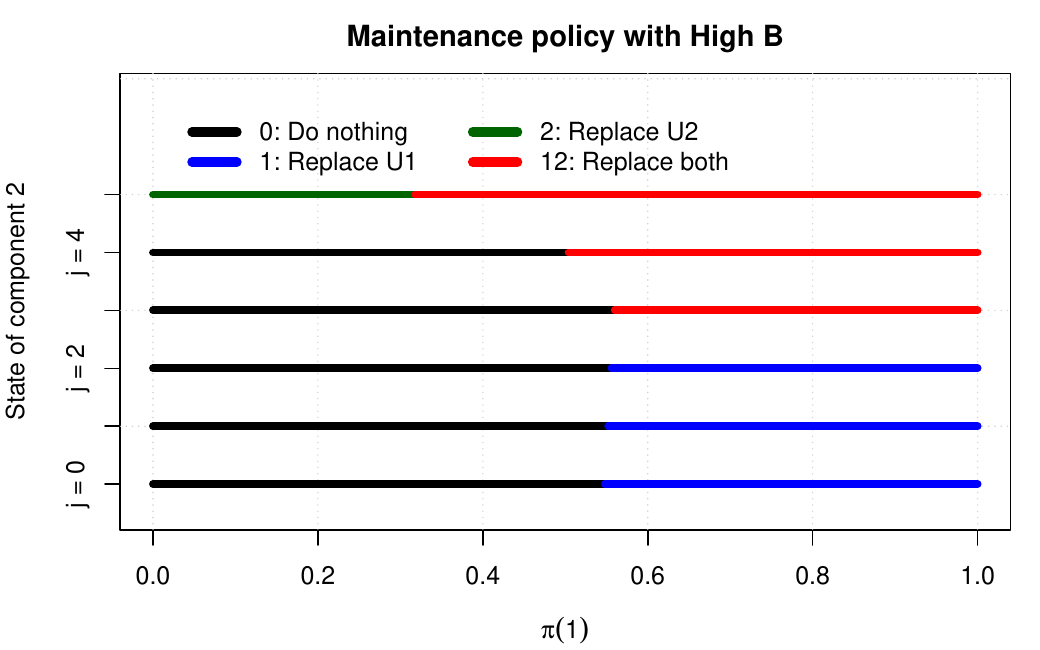}
    \caption{The optimal maintenance policy with more accurate observation}
    \label{fig:betterB}
\end{figure}

\begin{figure}[!htp]
    \centering
\includegraphics[width=0.9\linewidth]{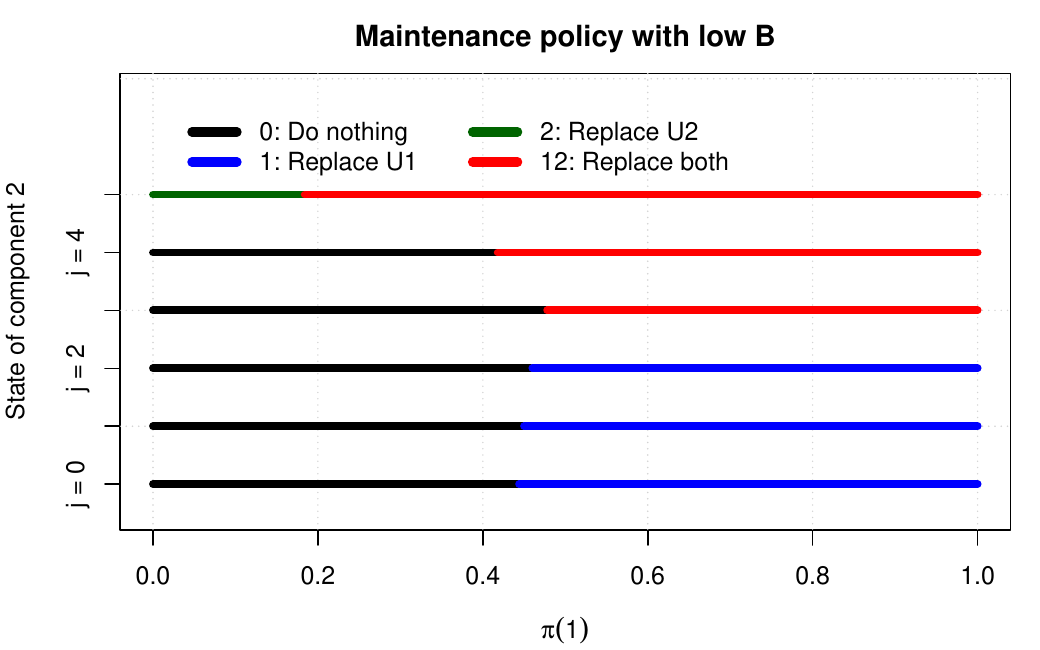}
    \caption{The optimal maintenance policy with less accurate observation}
    \label{fig:lowB}
\end{figure}
Figures~\ref{fig:betterB} and~\ref{fig:lowB} illustrate how the optimal maintenance policy varies with the observation precision of component~$U_2$. Compared with the base case, the overall policy structure remains largely unchanged, indicating structural robustness with respect to observation quality. In addition, higher observation precision leads to larger maintenance thresholds in terms of the posterior failure probability of component~$U_2$ for each degradation state of component $U_1$. This occurs because more accurate observations reduce uncertainty about the system’s future evolution, thereby allowing maintenance actions to be postponed until higher levels of failure risk are reached.

\section{Conclusions}\label{sec_conclusion}
In this work, we have investigated the maintenance optimization of a two-component dependent system in a POMDP framework. The system exhibits unidirectional positive degradation dependence, in which the health
state of component $U_1$ influences the degradation process of component $U_2$, but not
vice versa. We have analytically shown the properties of the value function and the optimal maintenance policy. A Baum-Welch algorithm with multiple sample paths is given for the parameter estimates. Through numerical experiments, we demonstrate the robustness of the parameter estimation procedure and systematically compare the performance of the proposed maintenance policy with several benchmark strategies. 

Further research may focus on the following issues. First, this work considers small scale system size, where the point based value iteration algorithm can be used. POMDPs generally face the curse of dimensionality, it is interesting to propose learning methods to solve large-scale problems \cite{drent2024optimal}.

Secondly, this work considers a system with two components, for a more general multi-component system, its maintenance policy in POMDP framework is challenging and interesting \cite{vora2023welfare}. 
In addition, constraints on maintenance accessibility, maintenance delay, and maintenance material availability can be considered in the future.

\bibliographystyle{apalike}
\bibliography{NewModel}



\section*{Acknowledgements}
This work is supported by the National Natural Science Foundation of China (Grant No.72371027), the Spanish Ministry of Science and Innovation - State Research Agency, under grants  PID2024-156234NB-C21 and PID2024-156234NB-C22; and IMAG-Maria de Maeztu grant CEX2020-001105-/AEI/10.13039/501100011033.
\end{document}